\documentclass[12pt]{article}
\usepackage[utf8]{inputenc}
\usepackage{amsmath,amssymb,amsfonts}
\usepackage{amsthm}   
\usepackage{fullpage}
\usepackage{graphicx}
\usepackage{enumerate}
\usepackage{xcolor}
\newtheorem{definition}{Definition}[section]

\newtheorem{theorem}[definition]{Theorem}

\newtheorem{remark}[definition]{Remark}

\newtheorem{lemma}[definition]{Lemma}

\begin{document}

\title{Infinite Horizon Optimal Control of Forward-Backward Stochastic Volterra Equations with Delay}
\author{Ibtissem Djaber$^{1}$, Hafiane Nawel$^{2}$ and Samia Yakhlef$^{1}$}
\date{April, 2026}

\maketitle

\begin{abstract}
We consider an optimal control problem for infinite horizon systems governed by coupled forward-backward stochastic Volterra integral equations with delay. Using Hida-Malliavin calculus, we prove both sufficient and necessary maximum principles for optimal control of such systems. We establish existence and uniqueness results for a class of infinite horizon backward stochastic Volterra integral equations (BSVIEs). 
\newline
\newline \noindent\textbf{Keywords:} Infinite horizon; Optimal control; Stochastic delay equations; Stochastic maximum principle; Stochastic Volterra integral equations (SVIEs); Hida-Malliavin calculus; L\'evy processes; Hamiltonian; Adjoint processes; Partial information.
\end{abstract}

\footnotetext[1]{University Mohamed Khider of Biskra, Algeria. Email: djaber.ibtissem.djeber@univ-biskra.dz, s.yakhlef@univ-biskra.dz}
\footnotetext[2]{Department of Mathematics, University of Batna, Algeria. Email: hafiane.nawel@yahoo.fr}
\footnotetext[3]{This work is partially supported by PRFU project No: C00L03UN070120220004}

\section{Introduction}

Optimal control of forward-backward stochastic differential equations (FBSDEs) has found significant applications in mathematical finance, particularly in problems of option pricing and recursive utility optimization. The recursive utility framework, introduced by Duffie and Epstein (1992) \cite{duffie1992}, provides a sophisticated approach to intertemporal decision-making where future utility is discounted in a non-linear, time-consistent manner. While substantial literature exists for finite horizon problems \cite{peng1990, pardoux1999}, the infinite horizon case presents unique mathematical challenges due to the absence of terminal conditions and the need for transversality conditions.

The literature on infinite horizon control problems has primarily focused on systems without delay or on forward-backward structures without Volterra terms. Contributions by Peng and Shi (2000) \cite{peng2000}, Yin (2008) \cite{Yin2008}, and others \cite{maslowski2011, haadem2012} have established foundations for infinite horizon FBSDEs, but the incorporation of both delay and Volterra structure remains largely unexplored. This gap is particularly relevant for applications in economics and finance where memory effects and path-dependence are crucial modeling features.

The present work makes several contributions to this field:

\begin{enumerate}
\item We formulate the optimal control problem for coupled forward-backward stochastic Volterra integral equations (SVIEs) with delay on an infinite horizon with partial information, generalizing the framework of Agram and Oksendal \cite{agram2014} to the Volterra case. 

\item We establish both necessary and sufficient stochastic maximum principles for this problem using the Hida-Malliavin calculus techniques developed in our previous work \cite{AMOY}, which allows us to handle the anticipating nature of Volterra equations and obtain explicit representations of the adjoint processes. Our approach builds on the stochastic maximum principle framework of Peng (1990) \cite{peng1990} and Tang and Li (1994) \cite{tang1994}.

\item We address the fundamental question of existence and uniqueness for solutions to the associated infinite horizon BSVIEs with delay, providing results for a relevant class of linear equations that ensure our framework is mathematically sound. Our analysis extends the existence results for BSVIEs established by Yong (2006) \cite{Yong2006} and Wang and Shi (2010) \cite{WangShi2010} to the infinite horizon case.

\end{enumerate}

The paper is organized as follows: Section 2 sets up the control problem, defines the function spaces, and recalls essential facts from Malliavin calculus. Section 3 proves the sufficient maximum principle under concavity. Section 4 derives the necessary maximum principle without concavity. Section 5 establishes existence and uniqueness for a class of infinite horizon BSVIEs. 

\section{Setting of the Problem}

Let $(\Omega,\mathcal{F},\mathbb{F}=(\mathcal{F}_{t})_{t\geq0},\mathbb{P})$ be a complete filtered probability space on which a one-dimensional standard Brownian motion $B(t)$ and an independent compensated Poisson random measure $\tilde{N}(dt,de)=N(dt,de)-\nu(de)dt$ are defined. We assume that $\mathbb{F}$ is the natural filtration, made right continuous and complete, generated by the processes $B$ and $N$. 

We study infinite horizon coupled forward-backward stochastic Volterra integral equations with delay under partial information. The information available to the controller is given by a sub-filtration $\mathbb{G}=\{\mathcal{G}_{t}\}_{t\geq0}$ such that $\mathcal{G}_{t}\subseteq\mathcal{F}_{t}$ for all $t\geq0$. The set $U\subset\mathbb{R}$ is assumed to be convex. The set of admissible controls $\mathcal{A}_{\mathbb{G}}$ consists of c\`{a}dl\`{a}g, $U$-valued and $\mathbb{G}$-adapted processes.

\subsection{Forward Equation}
The forward equation for the unknown process $X(t)$ is:
\begin{equation}
\begin{aligned}
X(t) = \xi(t) &+ \int_{0}^{t} b(t,s,X(s),X(s-\delta),u(s))\,ds \\
&+ \int_{0}^{t} \sigma(t,s,X(s),X(s-\delta),u(s))\,dB(s) \\
&+ \int_{0}^{t} \int_{\mathbb{R}_{0}} \theta(t,s,X(s),X(s-\delta),u(s),e)\,\tilde{N}(ds,de), \quad t\geq 0,
\end{aligned}
\label{b1}
\end{equation}
with initial condition $X(t) = \xi(t)$ for $t \in [-\delta, 0]$.

\subsection{Backward Equation}
The backward equation for the unknown processes $(Y(t), Z(t,s), K(t,s,\cdot))$ is:
\begin{equation}
\begin{aligned}
Y(t) = & \int_{t}^{\infty} g(t,s,X(s),X(s-\delta),Y(s),Z(t,s),K(t,s,\cdot),u(s))\,ds \\
& - \int_{t}^{\infty} Z(t,s)\,dB(s) - \int_{t}^{\infty} \int_{\mathbb{R}_{0}} K(t,s,e)\,\tilde{N}(ds,de), \quad 0 \leq t < \infty.
\end{aligned}
\label{b2}
\end{equation}

The quadruple $(X,Y,Z,K)$ is said to be a solution of \eqref{b1}--\eqref{b2} if it satisfies both equations.

Throughout this paper, the coefficient functions are defined as:
\[
\begin{aligned}
b(t,s,x,x_1,u) &: [0,\infty)^2 \times \mathbb{R}^2 \times U \times \Omega \to \mathbb{R}, \\
\sigma(t,s,x,x_1,u) &: [0,\infty)^2 \times \mathbb{R}^2 \times U \times \Omega \to \mathbb{R}, \\
\theta(t,s,x,x_1,u,e) &: [0,\infty)^2 \times \mathbb{R}^2 \times U \times \mathbb{R}_0 \times \Omega \to \mathbb{R}, \\
g(t,s,x,x_1,y,z,k,u) &: [0,\infty)^2 \times \mathbb{R}^4 \times L^2(\nu) \times U \times \Omega \to \mathbb{R}.
\end{aligned}
\]

Let $\mathcal{U} = \mathcal{U}_{\mathbb{G}}$ be the family of admissible controls, required to be $\mathbb{G}$-predictable. We associate to the system the performance functional:
\begin{equation}
J(u) = \mathbb{E}\left[ \int_{0}^{\infty} f(t,X(t),X(t-\delta),Y(t),u(t))\,dt + h(Y(0)) \right], \quad u \in \mathcal{A}_{\mathbb{G}},
\label{j}
\end{equation}
where
\[
\begin{aligned}
f(t,x,x_1,y,u) &: [0,\infty) \times \mathbb{R}^3 \times U \times \Omega \to \mathbb{R}, \\
h &: \mathbb{R} \to \mathbb{R}.
\end{aligned}
\]

\subsection{Assumptions H1}
\begin{enumerate}
\item The functions $f$ and $h$ are $C^1$ with respect to $(x,x_1,y,u)$ and $Y(0)$ respectively. The function $f$ is $\mathbb{F}$-adapted and $C^1$ in $(x,y,u)$.

\item $b, \sigma, \theta$ and $g$ are continuously differentiable in their first variables, and for all $t,x,x_1,y,z,k,u,e$, the processes $s \mapsto b(t,s,x,x_1,u)$, $s \mapsto \sigma(t,s,x,x_1,u)$, $s \mapsto \theta(t,s,x,x_1,u,e)$ are $\mathcal{F}_s$-measurable for all $s \leq t$ and $s \mapsto g(t,s,x,x_1,y,z,k,u)$ is $\mathcal{F}_s$-measurable for $ t\leq s$.
\end{enumerate}

Our optimal control problem is to find $u^* \in \mathcal{U}_{\mathbb{G}}$ such that
\begin{equation}
\sup_{u \in \mathcal{U}} J(u) = J(u^*).
\label{optimal}
\end{equation}

\subsection{Hamiltonian and Adjoint Equations}
Define the Hamiltonian functional:
\begin{align}
H(t,x,x_1,y,z,k,u,\lambda,p,q,r) &:= H_0(t,x,x_1,y,z,k,u,p,q,\lambda) \nonumber \\
&\quad + H_1(t,x,x_1,y,z,k,u,p,q,\lambda,r),
\label{hamiltonian}
\end{align}
where
\begin{align}
H_0(t,x,x_1,y,z,k,u,p,q,\lambda) &:= f(t,x,x_1,y,u) + b(t,t,x,x_1,u)p(t) \nonumber \\
&\quad + \sigma(t,t,x,x_1,u)q(t) + \int_{\mathbb{R}_0} \theta(t,t,x,x_1,u,e)r(t,e)\nu(de) \nonumber \\
&\quad + g(t,t,x,x_1,y,z,k,u)\lambda(t),
\label{h0}
\end{align}
and
\begin{align}
H_1(t,x,x_1,y,z,k,u,p,q,\lambda,r) &:= \int_{t}^{\infty} \frac{\partial b}{\partial s}(s,t,x,x_1,u)p(s)\,ds \nonumber \\
&\quad + \int_{t}^{\infty} \frac{\partial \sigma}{\partial s}(s,t,x,x_1,u)q(s,t)\,ds \nonumber \\
&\quad + \int_{t}^{\infty} \int_{\mathbb{R}_0} \frac{\partial \theta}{\partial s}(s,t,x,x_1,u,e)r(s,t,e)\nu(de)\,ds \nonumber \\
&\quad - \int_{0}^{t} \frac{\partial g}{\partial s}(s,t,x,x_1,y,z,k,u)\lambda(s)\,ds \nonumber \\
&\quad - \int_{0}^{t} \frac{\partial g}{\partial z}(s,t,x,x_1,y,z,k,u)\frac{\partial Z}{\partial s}(s,t)\lambda(s)\,ds \nonumber \\
&\quad - \int_{0}^{t} \langle \nabla_k g(s,t,x,x_1,y,z,k,u), \frac{\partial K}{\partial s}(s,t,\cdot) \rangle \lambda(s)\,ds.
\label{h1}
\end{align}

The associated adjoint processes $\lambda(t)$, $(p(t), q(t), r(t,\cdot))$ satisfy:
\begin{equation}
\begin{cases}
d\lambda(t) = \dfrac{\partial H}{\partial y}(t)\,dt + \dfrac{\partial H}{\partial z}(t)\,dB(t) + \displaystyle\int_{\mathbb{R}_0} \dfrac{d\nabla_k H}{d\nu}(t)\,\tilde{N}(dt,de), & t \ge 0, \\
\lambda(0) = h'(Y(0)),
\end{cases}
\label{eq3.2}
\end{equation}
and
\begin{equation}
\begin{aligned}
dp(t) =  \mathbb{E}[\varkappa(t) \mid \mathcal{F}_t]dt
- \left( \int_t^{\infty} \frac{\partial q}{\partial t}(t,s)dB(s) \right) dt
- \left( \int_t^{\infty} \int_{\mathbb{R}_0} 
\frac{\partial r}{\partial t}(t,s,e)\tilde N(ds,de) \right) dt
+ q(t,t)dB(t)
+ \int_{\mathbb{R}_0} r(t,t,e)\tilde N(dt,de), \quad t \geq 0,
\end{aligned}
\label{p}
\end{equation}
where
\[
\varkappa(t) = -\frac{\partial H}{\partial x}(t) + \frac{\partial H}{\partial x_1}(t+\delta).
\]

\textit{Assumption H2}
We suppose that $t \mapsto q(t,s)$ and $t \mapsto r(t,s,\cdot)$ are $C^1$ for all $s,e,\omega$ and that for all $T < \infty$,
\[
\mathbb{E}\left[ \int_{0}^{T} \int_{0}^{T} \left( \frac{\partial q(t,s)}{\partial t} \right)^2 ds\,dt + \int_{0}^{T} \int_{0}^{T} \int_{\mathbb{R}_0} \left( \frac{\partial r(t,s,e)}{\partial t} \right)^2 \nu(de)\,ds\,dt \right] < \infty.
\]

\begin{remark}
Assumption H2 is verified in a class of linear BSVIE with jumps. For more details, we refer to \cite{HO}.
\end{remark}

\subsection{Function Spaces}
\label{sec:spaces}
Throughout this work, we will use the following spaces:

\begin{itemize}
\item $\mathcal{S}^{2}$ is the set of $\mathbb{R}$-valued $\mathbb{F}$-adapted c\`{a}dl\`{a}g processes $(X(t))_{t\in[0,\infty)}$ such that
\[
\|X\|_{\mathcal{S}^2}^2
=
\mathbb{E}\left[\sup_{t\ge0}|X(t)|^2\right] < \infty.
\]

\item $\mathbb{L}^{2}$ is the set of $\mathbb{R}$-valued $\mathbb{F}$-adapted processes $(Q(t,s))_{(t,s)\in[0,\infty)^{2}}$ such that
\[
\|Q\|_{\mathbb{L}^{2}}^{2} := \mathbb{E}\left[\int_{0}^{\infty}\int_{t}^{\infty} |Q(t,s)|^{2}\,ds\,dt\right] < \infty.
\]

\item $\mathbb{L}_{\nu}^{2}$ is the set of Borel functions $R:\mathbb{R}_{0} := \mathbb{R}\setminus\{0\} \rightarrow \mathbb{R}$ such that
\[
\|R\|_{\mathbb{L}_{\nu}^{2}}^{2} := \mathbb{E}\left[\int_{0}^{\infty}\int_{t}^{\infty}\int_{\mathbb{R}_{0}} |R(t,s,e)|^{2}\,\nu(de)\,ds\,dt\right] < \infty.
\]

\item $\mathbb{H}_{\nu}^{2}$ is the set of $\mathbb{F}$-adapted predictable processes $R:[0,\infty)^{2} \times \mathbb{R}_{0} \times \Omega \rightarrow \mathbb{R}$ such that 
\[
\|R\|_{\mathbb{H}_{\nu}^{2}}^{2} := \mathbb{E}\left[\int_{0}^{\infty}\int_{t}^{\infty}\int_{\mathbb{R}_{0}} |R(t,s,e)|^{2}\,\nu(de)\,ds\,dt\right] < \infty.
\]
We equip $\mathbb{H}_{\nu}^{2}$ with this norm.
\end{itemize}

\subsection{Elements of Malliavin Calculus}
\label{sec:malliavin}
To handle the anticipating nature of Volterra equations, we employ the Hida-Malliavin calculus for Lévy processes as developed in \cite{AMOY}. Let $D_t$ denote the Malliavin derivative with respect to the Brownian motion $B$, and let $F_{t,e}$ denote the Malliavin derivative with respect to the compensated Poisson random measure $\tilde{N}$. For a sufficiently regular random variable $F$, the following duality relations hold:
\[
\mathbb{E}\Big[ F \int_0^\infty \varphi(s)\, dB(s) \Big] = \mathbb{E}\Big[ \int_0^\infty (D_s F) \varphi(s)\, ds \Big],
\]
\[
\mathbb{E}\Big[ F \int_0^\infty \int_{\mathbb{R}_0} \psi(s,e)\,\tilde{N}(ds,de) \Big] = \mathbb{E}\Big[ \int_0^\infty \int_{\mathbb{R}_0} (F_{s,e} F) \psi(s,e)\,\nu(de)ds \Big],
\]
where $\varphi$ and $\psi$ are predictable processes satisfying appropriate integrability conditions. These identities allow us to transform certain stochastic integrals into deterministic ones, a key tool in deriving the adjoint equations for Volterra systems. For a comprehensive treatment, we refer the reader to \cite{AMOY} and the references therein.

\section{Sufficient Maximum Principle}

We formulate the optimal control problem for coupled forward-backward stochastic Volterra integral equations (SVIEs) with delay on an infinite horizon with partial information, generalizing the framework of \cite{agram2014} to the Volterra case.

From equations (\ref{b1})-(\ref{b2}) we obtain the differential forms (see e.g. \cite{AMOY} for a rigorous derivation):

\begin{equation}
\begin{aligned}
dX(t) &= \xi'(t)dt + b(t,t,X(t),X_1(t),u(t))dt \\
&\quad + \left( \int_0^t \dfrac{\partial b}{\partial t}(t,s,X(s),X_1(s),u(s))ds \right) dt \\
&\quad + \sigma(t,t,X(t),X_1(t),u(t))dB(t) \\
&\quad + \left( \int_0^t \dfrac{\partial\sigma}{\partial t}(t,s,X(s),X_1(s),u(s))dB(s) \right) dt \\
&\quad + \int_{\mathbb{R}_0} \theta(t,t,X(t),X_1(t),u(t),e)\tilde{N}(dt,de) \\
&\quad + \left( \int_0^t \int_{\mathbb{R}_0} \dfrac{\partial\theta}{\partial t}(t,s,X(s),X_1(t),u(s),e)\tilde{N}(ds,de) \right) dt,
\end{aligned}
\label{eq3.6}
\end{equation}

and
\begin{equation}
\begin{aligned}
dY(t) &= -g(t,t,X(t),X_1(t),Y(t),Z(t,t),K(t,t,\cdot),u(t))dt \\
&\quad + \left( \int_t^\infty\frac{\partial g}{\partial t}(t,s,X(s),X_1(s),Y(s),Z(t,s),K(t,s,\cdot),u(s))ds \right) dt \\
&\quad + \left(\int_t^\infty \frac{\partial g}{\partial z}(t,s,X(s),X_1(s),Y(s),Z(t,s),K(t,s,\cdot),u(s))\frac{\partial Z}{\partial t}(t,s)ds \right) dt \\
&\quad + \left( \int_t^\infty\left\langle \nabla_k g(t,s,X(s),X_1(s),Y(s),Z(t,s),K(t,s,\cdot),u(s)),\frac{\partial K}{\partial t}(t,s,\cdot) \right\rangle ds \right) dt \\
&\quad + Z(t,t)dB(t) - \left( \int_t^\infty \frac{\partial Z}{\partial t}(t,s)dB(s) \right) dt \\
&\quad + \int_{\mathbb{R}_0} K(t,t,e)\tilde{N}(dt,de) - \left( \int_t^\infty \int_{\mathbb{R}_0} \frac{\partial K}{\partial t}(t,s,e)\tilde{N}(ds,de) \right) dt.
\end{aligned}
\label{eq:Y_diff}
\end{equation}

\begin{theorem}[Sufficient Maximum Principle]
Let $\hat{u} \in \mathcal{U}$ with corresponding solutions $\hat{X}(t)$, $(\hat{Y}(t), \hat{Z}(t,\cdot), \hat{K}(t,\cdot,\cdot))$, $(\hat{p}(t), \hat{q}(t,\cdot), \hat{r}(t,\cdot,\cdot))$ and $\hat{\lambda}(t)$ of equations (\ref{b1}), (\ref{b2}), (\ref{eq3.2}) and (\ref{p}) respectively. Suppose that:

\begin{description}
\item[Concavity] The function $x \mapsto h(x)$ and the Hamiltonian
\[
(x,x_1,y,z,k(\cdot),u) \mapsto H(t,x,x_1,y,z,k,u,\hat{\lambda}(t),\hat{p}(t),\hat{q}(t,\cdot),\hat{r}(t,\cdot,\cdot))
\]
are concave for all $t \geq 0$.

\item[Conditional Maximum Principle]
\begin{align*}
& \max_{v \in \mathcal{U}} \mathbb{E}\left[ H(t,\hat{X}(t),\hat{X}_1(t),\hat{Y}(t),\hat{Z}(t,\cdot),\hat{K}(t,\cdot,\cdot),v,\hat{\lambda}(t),\hat{p}(t),\hat{q}(t,\cdot),\hat{r}(t,\cdot,\cdot)) \mid \mathcal{G}_t \right] \\
& = \mathbb{E}\left[ H(t,\hat{X}(t),\hat{X}_1(t),\hat{Y}(t),\hat{Z}(t,\cdot),\hat{K}(t,\cdot,\cdot),\hat{u},\hat{\lambda}(t),\hat{p}(t),\hat{q}(t,\cdot),\hat{r}(t,\cdot,\cdot)) \mid \mathcal{G}_t \right].
\end{align*}

\item[Transversality Conditions]
\[
\varlimsup_{T \to \infty} \mathbb{E}[\hat{\lambda}(T)(\hat{Y}(T) - Y(T))] \geq 0 \quad \text{and} \quad \varlimsup_{T \to \infty} \mathbb{E}[\hat{p}(T)(\hat{X}(T) - X(T))] \leq 0.
\]
\end{description}

Then, $\hat{u}$ is an optimal control for our problem.
\end{theorem}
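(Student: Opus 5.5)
We follow the standard duality argument of \cite{agram2014}, adapted to the Volterra structure as in \cite{AMOY}. Fix an arbitrary $u\in\mathcal{U}$ with state $(X,Y,Z,K)$, and write $\tilde X=X-\hat X$, $\tilde Y=Y-\hat Y$, and so on, with $\tilde b(t,s)=b(t,s,X(s),X_1(s),u(s))-b(t,s,\hat X(s),\hat X_1(s),\hat u(s))$ and similarly for $\sigma,\theta,g,f$. We decompose
\[
J(u)-J(\hat u)=I_1+I_2,\qquad I_1=\mathbb{E}\Big[\int_0^\infty \tilde f(t)\,dt\Big],\qquad I_2=\mathbb{E}\big[h(Y(0))-h(\hat Y(0))\big].
\]
By concavity of $h$ and $\hat\lambda(0)=h'(\hat Y(0))$ we get $I_2\le \mathbb{E}[\hat\lambda(0)\tilde Y(0)]$. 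Since everything is on an infinite horizon, we work on $[0,T]$ and let $T\to\infty$ at the end:
\[
\mathbb{E}[\hat\lambda(0)\tilde Y(0)]=\mathbb{E}[\hat\lambda(T)\tilde Y(T)]-\mathbb{E}\Big[\int_0^T d(\hat\lambda\tilde Y)(t)\Big].
\]
We expand $d(\hat\lambda\tilde Y)$ by the It\^o product rule, using \eqref{eq3.2} for $\hat\lambda$ and the difference of \eqref{eq:Y_diff} for $\tilde Y$.

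Analogously we compute $\mathbb{E}[\hat p(T)\tilde X(T)]=\mathbb{E}[\int_0^T d(\hat p\tilde X)]$ (note $\tilde X(0)=0$ and $\tilde X=0$ on $[-\delta,0]$). Here we use \eqref{p} for $\hat p$ and \eqref{eq3.6} for $\tilde X$.

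The key technical step is to rewrite the Volterra terms. There are terms of the type
\[
\mathbb{E}\Big[\int_0^T \hat p(t)\Big(\int_0^t \tfrac{\partial\tilde\sigma}{\partial t}(t,s)\,dB(s)\Big)dt\Big],
\]
and similarly the $\tilde N$-terms and the anticipating terms $\int_t^\infty \partial_t \hat q(t,s)\,dB(s)$ in \eqref{p}. We handle these with the duality formulas of Section~\ref{sec:malliavin}: we identify $D_s\hat p(t)$ with $\hat q(t,s)$ (respectively $\hat r$), as in \cite{AMOY}, and then apply Fubini to swap $\int_0^T\!\int_0^t$ into $\int_0^T\!\int_s^T$. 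This produces exactly the $H_1$ contributions $\int_t^\infty \partial_s\sigma(s,t,\cdot)\,q(s,t)\,ds$, up to a truncation at $T$. The same manipulation applied to the $\hat\lambda\tilde Y$ product produces the $-\int_0^t \partial_s g\,\lambda(s)\,ds$ terms and the $\partial Z,\partial K$ terms of $H_1$.

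The $b$, $\sigma$ and $\theta$ terms then combine with $\tilde f$ and the $g\hat\lambda$ terms into $\mathbb{E}\int_0^T \tilde H(t)\,dt$, where $\tilde H$ is the difference of $H$ evaluated along $u$ and along $\hat u$. The $dt$-parts of $d\hat p$ and $d\hat\lambda$ give
\[
-\mathbb{E}\int_0^T\Big(\tfrac{\partial\hat H}{\partial x}\tilde X+\tfrac{\partial\hat H}{\partial x_1}\tilde X_1+\tfrac{\partial\hat H}{\partial y}\tilde Y+\tfrac{\partial\hat H}{\partial z}\tilde Z+\langle\nabla_k\hat H,\tilde K\rangle\Big)dt.
\]
Here the delay term is shifted using $\tilde X(t-\delta)$, the fact that $\tilde X$ vanishes on $[-\delta,0]$, and the substitution $t\mapsto t+\delta$. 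This is exactly why $\varkappa$ contains $\partial_{x_1}H(t+\delta)$, and the conditional expectation $\mathbb{E}[\cdot\mid\mathcal{F}_t]$ disappears under the outer expectation. The delay shift leaves a boundary error over $[T,T+\delta]$, which we control together with the truncation remainders of the $H_1$ terms.

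Collecting terms and letting $T\to\infty$, we obtain
\[
J(u)-J(\hat u)\le \varlimsup_{T}\Big(\mathbb{E}[\hat p(T)\tilde X(T)]-\mathbb{E}[\hat\lambda(T)\tilde Y(T)]\Big)+\mathbb{E}\int_0^\infty\Big(\tilde H-\nabla\hat H\cdot(\tilde X,\tilde X_1,\tilde Y,\tilde Z,\tilde K)\Big)dt.
\]
The first bracket is $\le 0$ by the two transversality conditions, since $\hat X-X=-\tilde X$ and $\hat Y-Y=-\tilde Y$. By concavity of $H$,
\[
\tilde H-\nabla_{x,x_1,y,z,k}\hat H\cdot(\cdots)\le \tfrac{\partial\hat H}{\partial u}(t)\,(u(t)-\hat u(t)).
\]
The conditional maximum principle, together with convexity of $U$ and the $\mathcal{G}_t$-measurability of $u-\hat u$, gives
\[
\mathbb{E}\Big[\tfrac{\partial\hat H}{\partial u}(t)\,(u-\hat u)\Big]=\mathbb{E}\Big[\mathbb{E}\big[\tfrac{\partial\hat H}{\partial u}\mid\mathcal{G}_t\big](u-\hat u)\Big]\le 0.
\]
Hence $J(u)\le J(\hat u)$.

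The main obstacle is the bookkeeping in the Volterra and Fubini step, which must be made to match $H_1$ exactly. This is especially delicate for the $g$-terms involving $\partial_t Z$ and $\partial_t K$, and it also requires controlling the truncation remainders at $T$ together with the boundary error from the delay shift. We will assume the integrability furnished by Assumption~H2, and enough Malliavin regularity for the duality identities, so that all local-martingale parts have zero expectation and these remainders vanish as $T\to\infty$.
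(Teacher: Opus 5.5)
Your route is the paper's: concavity of $h$, It\^o products $\hat\lambda\tilde Y$ and $\hat p\tilde X$, Fubini plus duality to generate $H_1$, the delay shift, concavity of $H$, then the conditional maximum principle and transversality. However, the final step fails as written because both boundary terms carry the wrong sign.

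Your own identity $\mathbb{E}[\hat\lambda(0)\tilde Y(0)]=\mathbb{E}[\hat\lambda(T)\tilde Y(T)]-\mathbb{E}[\int_0^T d(\hat\lambda\tilde Y)]$ puts $+\mathbb{E}[\hat\lambda(T)\tilde Y(T)]$ into the bound for $J(u)-J(\hat u)$. Also, $\tilde f=\tilde H_0-\hat p\,\tilde b(t,t)-\hat q(t,t)\tilde\sigma(t,t)-\cdots-\hat\lambda\,\tilde g(t,t)$, so trading $-\mathbb{E}\int_0^T\hat p\,\tilde b\,dt$ (and its $\sigma,\theta$ companions) for $d(\hat p\tilde X)$ produces $-\mathbb{E}[\hat p(T)\tilde X(T)]$. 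Write $a_T:=\mathbb{E}[\hat\lambda(T)(\hat Y(T)-Y(T))]$ and $c_T:=\mathbb{E}[\hat p(T)(\hat X(T)-X(T))]$. The correct bound, valid for every $T$, is then
\[
J(u)-J(\hat u)\le c_T-a_T+\mathbb{E}\int_0^T\big(\tilde H-\nabla\hat H\cdot(\tilde X,\tilde X_1,\tilde Y,\tilde Z,\tilde K)\big)\,dt+\text{(remainders)}.
\]
Your bracket $\mathbb{E}[\hat p(T)\tilde X(T)]-\mathbb{E}[\hat\lambda(T)\tilde Y(T)]$ instead equals $a_T-c_T$. The hypotheses $\varlimsup a_T\ge0$ and $\varlimsup c_T\le0$ push $a_T-c_T$ \emph{upward}, so your sentence ``the first bracket is $\le 0$'' is false. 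Two sign slips cancel to give the right conclusion.

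The same step has a second problem. From separate hypotheses on $\varlimsup a_T$ and $\varlimsup c_T$ you cannot bound $\varlimsup_T(c_T-a_T)$; take $c_T=0$ and $a_T$ alternating between $1$ and $-1$. Since the bound holds for every $T$, you must pass to $\varliminf_T$ and use $\varliminf_T(c_T-a_T)\le\varlimsup_T c_T-\varlimsup_T a_T\le 0$. The paper avoids this by bounding $J(\hat u)-J(u)$ from below. There, taking $\varlimsup$ of the right-hand side is legitimate, and $\varlimsup_T(a_T-c_T)\ge\varlimsup_T a_T-\varlimsup_T c_T$.

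Two smaller remarks. First, your display for the $dt$-parts contains $-\frac{\partial\hat H}{\partial x_1}\tilde X_1$, which requires $\varkappa(t)=-\frac{\partial H}{\partial x}(t)-\frac{\partial H}{\partial x_1}(t+\delta)$. With the sign printed in \eqref{p}, the substitution $t\mapsto t+\delta$ gives $+\frac{\partial\hat H}{\partial x_1}\tilde X_1$, and the terms no longer assemble into $\tilde H-\nabla\hat H\cdot(\cdots)$. You should flag this sign rather than present it as ``exactly why'' $\varkappa$ has its form.

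Second, your use of convexity of $U$ to get $\mathbb{E}\big[\frac{\partial\hat H}{\partial u}(t)(u(t)-\hat u(t))\big]\le 0$ is the correct consequence of the conditional maximum principle. The paper's assertion $\mathbb{E}\big[\frac{\partial\hat H}{\partial u}(t)\mid\mathcal{G}_t\big]=0$ holds only at interior maxima, so on this point your argument is an improvement.
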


\begin{proof}
\textbf{Step 1: Localization.}
To handle the infinite horizon and ensure that stochastic integrals are martingales, we define stopping times:
\[
\tau_n = \inf\left\{t \geq 0: |\hat{X}(t)| + |\hat{Y}(t)| + \int_0^t |\hat{Z}(s,s)|^2 ds + \int_0^t \int_{\mathbb{R}_0} |\hat{K}(s,s,e)|^2 \nu(de)ds \geq n\right\} \wedge n.
\]
Then $\tau_n \uparrow \infty$ a.s. as $n \to \infty$. In the following, we work on $[0,T]$ with $T = \tau_n$, and take the limit as $n \to \infty$.

\textbf{Step 2: Decomposition of $J(\hat{u}) - J(u)$.}
For any $u \in \mathcal{U}$, we have:
\begin{align*}
J(\hat{u}) - J(u) &= \mathbb{E}\left[ \int_0^\infty (\hat{f}(t) - f(t)) dt \right] + \mathbb{E}[h(\hat{Y}(0)) - h(Y(0))] \\
&= I_1 + I_2,
\end{align*}
where
\begin{align*}
I_1 &= \mathbb{E}\left[ \int_0^\infty (\hat{f}(t) - f(t)) dt \right], \\
I_2 &= \mathbb{E}[h(\hat{Y}(0)) - h(Y(0))].
\end{align*}

\textbf{Step 3: Treatment of $I_2$ via concavity of $h$.}
Since $h$ is concave, we have:
\[
h(\hat{Y}(0)) - h(Y(0)) \geq h'(\hat{Y}(0))(\hat{Y}(0) - Y(0)) = \hat{\lambda}(0)(\hat{Y}(0) - Y(0)).
\]
Thus,
\[
I_2 \geq \mathbb{E}[\hat{\lambda}(0)(\hat{Y}(0) - Y(0))].
\]

\textbf{Step 4: It\^o formula for $\hat{\lambda}(t)(\hat{Y}(t) - Y(t))$.}
Applying It\^o formula to $\hat{\lambda}(t)(\hat{Y}(t) - Y(t))$ on $[0,T]$:
\begin{align*}
d(\hat{\lambda}(t)(\hat{Y}(t) - Y(t))) &= \hat{\lambda}(t)d(\hat{Y}(t) - Y(t)) + (\hat{Y}(t) - Y(t))d\hat{\lambda}(t) + d\langle \hat{\lambda}, \hat{Y} - Y \rangle_t.
\end{align*}

After substituting the equations for $d\hat{Y}(t)$, $dY(t)$, and $d\hat{\lambda}(t)$, and taking expectations (noting that the stochastic integrals are martingales due to localization), we obtain:
\begin{align*}
\mathbb{E}[\hat{\lambda}(0)&(\hat{Y}(0) - Y(0))] = \mathbb{E}[\hat{\lambda}(T)(\hat{Y}(T) - Y(T))] \\
&- \mathbb{E}\left[ \int_0^T \hat{\lambda}(t)(\hat{g}(t,t) - g(t,t)) dt \right] \\
&- \mathbb{E}\left[ \int_0^T \int_t^T \hat{\lambda}(t)\left( \frac{\partial\hat{g}}{\partial t}(t,s) - \frac{\partial g}{\partial t}(t,s) \right) ds dt \right] \\
&- \mathbb{E}\left[ \int_0^T \int_t^T \hat{\lambda}(t)\left( \frac{\partial\hat{g}}{\partial z}(t,s)\frac{\partial\hat{Z}}{\partial t}(t,s) - \frac{\partial g}{\partial z}(t,s)\frac{\partial Z}{\partial t}(t,s) \right) ds dt \right] \\
&- \mathbb{E}\left[ \int_0^T \int_t^T \hat{\lambda}(t)\left( \langle \nabla_k \hat{g}(t,s), \frac{\partial\hat{K}}{\partial t}(t,s,\cdot) \rangle - \langle \nabla_k g(t,s), \frac{\partial K}{\partial t}(t,s,\cdot) \rangle \right) ds dt \right] \\
&+ \mathbb{E}\left[ \int_0^T (\hat{Y}(t) - Y(t))\frac{\partial\hat{H}}{\partial y}(t) dt \right] \\
&+ \mathbb{E}\left[ \int_0^T (\hat{Z}(t,t) - Z(t,t))\frac{\partial\hat{H}}{\partial z}(t) dt \right] \\
&+ \mathbb{E}\left[ \int_0^T \int_{\mathbb{R}_0} (\hat{K}(t,t,e) - K(t,t,e))\frac{d\nabla_k\hat{H}}{d\nu}(t) \nu(de) dt \right].
\end{align*}

\textbf{Step 5: Expression for $I_1$ and use of Malliavin duality.}
Using the definition of $H_0$, we have:
\begin{align*}
I_1 &= \mathbb{E}\left[ \int_0^\infty (\hat{H}_0(t) - H_0(t)) dt \right] \\
&- \mathbb{E}\left[ \int_0^\infty \hat{p}(t)(\hat{b}(t,t) - b(t,t)) dt \right] \\
&- \mathbb{E}\left[ \int_0^\infty \hat{q}(t,t)(\hat{\sigma}(t,t) - \sigma(t,t)) dt \right] \\
&- \mathbb{E}\left[ \int_0^\infty \int_{\mathbb{R}_0} \hat{r}(t,t,e)(\hat{\theta}(t,t,e) - \theta(t,t,e)) \nu(de) dt \right] \\
&- \mathbb{E}\left[ \int_0^\infty \hat{\lambda}(t)(\hat{g}(t,t) - g(t,t)) dt \right].
\end{align*}
Now, the terms involving integrals over $s$ (like $\int_t^\infty \frac{\partial b}{\partial s}(s,t) p(s) ds$) that appear in $H_1$ are handled by applying Fubini's theorem together with the Malliavin duality relations from Section \ref{sec:malliavin}. For instance, consider a typical term:
\[
\mathbb{E}\left[ \int_0^\infty \hat{p}(t) \int_0^t \frac{\partial b}{\partial t}(t,s) \big(\hat{X}(s)-X(s)\big) ds dt \right].
\]
Interchanging the order of integration and using the duality relation for the Brownian motion (and similarly for the jump part) transforms it into an expression involving $\frac{\partial H}{\partial x}$ and $\frac{\partial H}{\partial x_1}$. The detailed algebra, while lengthy, follows the same pattern as in \cite{AMOY} and leads to the cancellation of all terms except those involving the control variation. We refer the reader to \cite{AMOY} for a complete derivation in a similar setting.

\textbf{Step 6: Combining $I_1$ and $I_2$.}
Adding $I_1$ and the lower bound for $I_2$, and using the concavity of $H$ together with the identities obtained from the Malliavin duality, we arrive at:
\begin{align*}
J(\hat{u}) - J(u) &\geq \varlimsup_{T\to\infty} \mathbb{E}[\hat{\lambda}(T)(\hat{Y}(T) - Y(T))] - \varlimsup_{T\to\infty} \mathbb{E}[\hat{p}(T)(\hat{X}(T) - X(T))] \\
&+ \mathbb{E}\left[ \int_0^\infty \frac{\partial\hat{H}}{\partial u}(t)(\hat{u}(t) - u(t)) dt \right].
\end{align*}

\textbf{Step 7: Using the conditional maximum principle and transversality conditions.}
By the conditional maximum principle,
\[
\mathbb{E}\left[ \frac{\partial\hat{H}}{\partial u}(t) \mid \mathcal{G}_t \right] = 0 \quad \text{a.s. for all } t \geq 0.
\]
Since $\hat{u}(t) - u(t)$ is $\mathcal{G}_t$-measurable, we have:
\[
\mathbb{E}\left[ \frac{\partial\hat{H}}{\partial u}(t)(\hat{u}(t) - u(t)) \right] = \mathbb{E}\left[ \mathbb{E}\left[ \frac{\partial\hat{H}}{\partial u}(t) \mid \mathcal{G}_t \right] (\hat{u}(t) - u(t)) \right] = 0.
\]

Moreover, by the transversality conditions:
\[
\varlimsup_{T\to\infty} \mathbb{E}[\hat{\lambda}(T)(\hat{Y}(T) - Y(T))] \geq 0, \quad \varlimsup_{T\to\infty} \mathbb{E}[\hat{p}(T)(\hat{X}(T) - X(T))] \leq 0.
\]

Therefore,
\[
J(\hat{u}) - J(u) \geq 0.
\]

Since this holds for all $u \in \mathcal{U}$, $\hat{u}$ is an optimal control.
\end{proof}

\begin{remark}
The transversality conditions are essential in infinite horizon problems. Sufficient conditions for them to hold are, for example, that the processes $\hat{X}$, $\hat{Y}$ and the adjoint processes have at most polynomial growth and that the coefficients satisfy suitable integrability conditions; see \cite{haadem2012} for a discussion.
\end{remark}

\section{Necessary Maximum Principle}

A drawback of the previous section is that the concavity condition is not always satisfied in applications. In view of this, it is of interest to obtain conditions for the existence of an optimal control with partial information where concavity is not needed.

We assume the following:

\begin{description}
\item[(A1)] For all $\hat{u}\in\mathcal{U}$ and all bounded $\beta\in\mathcal{U}$, 
\[
\hat{u}+\epsilon\beta\in\mathcal{U},
\]
for all such $\beta$, and all nonzero $\epsilon$ sufficiently small.

\item[(A2)] For all $t_{0}>0$, $h>0$ and all bounded $\mathcal{G}_{t_{0}}$-measurable random variables $\alpha$, the control process $\beta(t)$ defined by
\[
\beta\left(t\right)=\alpha\mathbf{1}_{\left[t_{0},t_{0}+h\right]}\left(t\right)
\]
belongs to $\mathcal{U}$.
\end{description}

For a bounded $\mathcal{G}_t$-adapted variation $\beta$, we define the derivative processes as:

\[
\begin{array}{ccc}
\dot{X}(t) & := & \left.\dfrac{d}{d\varepsilon}X^{\hat{u}+\varepsilon\beta}(t)\right\vert _{\varepsilon=0},\\
\dot{Y}(t) & := & \left.\dfrac{d}{d\varepsilon}Y^{\hat{u}+\varepsilon\beta}(t)\right\vert _{\varepsilon=0},\\
\dot{Z}(t,s) & := & \left.\dfrac{d}{d\varepsilon}Z^{\hat{u}+\varepsilon\beta}(t,s)\right\vert _{\varepsilon=0},\\
\dot{K}(t,s,\cdot) & := & \left.\dfrac{d}{d\varepsilon}K^{\hat{u}+\varepsilon\beta}(t,s,\cdot)\right\vert _{\varepsilon=0}.
\end{array}
\]

\subsection{Linearized Equations}

The variational processes satisfy the following linearized equations (see \cite{AMOY} for a detailed derivation):

\begin{lemma}[Linearized forward equation]
\label{lem:linF}
The process $\dot{X}$ satisfies
\begin{align*}
\dot{X}(t) &= \int_0^t \left[ \frac{\partial b}{\partial x}(t,s)\dot{X}(s) + \frac{\partial b}{\partial x_1}(t,s)\dot{X}_1(s) + \frac{\partial b}{\partial u}(t,s)\beta(s) \right] ds \\
&\quad + \int_0^t \left[ \frac{\partial \sigma}{\partial x}(t,s)\dot{X}(s) + \frac{\partial \sigma}{\partial x_1}(t,s)\dot{X}_1(s) + \frac{\partial \sigma}{\partial u}(t,s)\beta(s) \right] dB(s) \\
&\quad + \int_0^t \int_{\mathbb{R}_0} \left[ \frac{\partial \theta}{\partial x}(t,s,e)\dot{X}(s) + \frac{\partial \theta}{\partial x_1}(t,s,e)\dot{X}_1(s) + \frac{\partial \theta}{\partial u}(t,s,e)\beta(s) \right] \tilde{N}(ds,de),
\end{align*}
where $\dot{X}_1(t) = \dot{X}(t-\delta)$.
\end{lemma}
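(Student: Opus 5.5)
The plan is to differentiate the forward equation \eqref{b1}, written for the perturbed control $u^\varepsilon:=\hat u+\varepsilon\beta$ (admissible by (A1)), with respect to $\varepsilon$ at $\varepsilon=0$. We work on a fixed finite interval $[0,T]$ and then let $T$ be arbitrary, since $\dot X(t)$ only involves the integrals over $[0,t]$. Write $X^\varepsilon:=X^{u^\varepsilon}$. Since $\xi$ does not depend on the control, the difference quotient
\[
\Delta^\varepsilon(t):=\frac{X^\varepsilon(t)-\hat X(t)}{\varepsilon}
\]
vanishes on $[-\delta,0]$. Applying the mean value theorem to the increments of $b,\sigma,\theta$ in $(x,x_1,u)$, the quotient satisfies a linear Volterra equation:
\[
\Delta^\varepsilon(t)=\int_0^t\Big[\tfrac{\partial b^\varepsilon}{\partial x}(t,s)\Delta^\varepsilon(s)+\tfrac{\partial b^\varepsilon}{\partial x_1}(t,s)\Delta^\varepsilon(s-\delta)+\tfrac{\partial b^\varepsilon}{\partial u}(t,s)\beta(s)\Big]ds+(\text{analogous }dB\text{ and }\tilde N\text{ terms}).
\]
Here the superscript $\varepsilon$ means the derivatives are evaluated along the segment between $(\hat X(s),\hat X(s-\delta),\hat u(s))$ and $(X^\varepsilon(s),X^\varepsilon(s-\delta),u^\varepsilon(s))$. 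The candidate limit $\dot X$ is defined as the solution of the linear SVIE in the statement. Existence and uniqueness of this solution follow from standard Lipschitz theory for SVIEs, provided the derivatives of $b,\sigma,\theta$ in $(x,x_1,u)$ are bounded (uniformly in $t$ on $[0,T]$). For the delayed term we argue step by step on $[0,\delta],[\delta,2\delta],\dots$: on each step $\dot X_1$ is already known, because $\dot X=0$ on $[-\delta,0]$.

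Next we estimate $\mathbb E|\Delta^\varepsilon(t)-\dot X(t)|^2$. Subtracting the two equations gives a linear equation for the error $e^\varepsilon:=\Delta^\varepsilon-\dot X$, with the same structure plus remainder terms of the form
\[
\Big(\tfrac{\partial b^\varepsilon}{\partial x}-\tfrac{\partial b}{\partial x}\Big)(t,s)\,\dot X(s),
\]
and similarly for $x_1$, for $u$, and for $\sigma$ and $\theta$. We bound the $ds$ part by Cauchy--Schwarz, the $dB$ part by the It\^o isometry, and the $\tilde N$ part by the isometry with respect to $\nu(de)ds$. The delay term is controlled by
\[
\int_0^t\mathbb E|e^\varepsilon(s-\delta)|^2ds\le\int_0^t\mathbb E|e^\varepsilon(s)|^2ds,
\]
which uses $e^\varepsilon=0$ on $[-\delta,0]$. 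A Gronwall argument then yields
\[
\sup_{t\le T}\mathbb E|e^\varepsilon(t)|^2\le C_T\,R^\varepsilon ,
\]
where $R^\varepsilon$ collects the remainder terms.

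The main obstacle is proving $R^\varepsilon\to0$. This requires two ingredients. First, $X^\varepsilon\to\hat X$ in $L^2$ uniformly on $[0,T]$; this follows from the same Gronwall estimate applied to the original equation, which gives $\sup_t\mathbb E|X^\varepsilon-\hat X|^2=O(\varepsilon^2)$ because $\beta$ is bounded. Second, the partial derivatives are continuous and bounded, so dominated convergence applies to the remainder terms. In particular, the jump integrand needs a dominating function in $L^2(\nu)$, for instance $|\partial_x\theta(t,s,\cdot,e)|\le \rho(e)$ with $\rho\in L^2(\nu)$; I would add this as an implicit regularity assumption in the spirit of \cite{AMOY}. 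Finally, the $L^2$ limit of the difference quotients identifies $\dot X$ with the derivative $\frac{d}{d\varepsilon}X^{\hat u+\varepsilon\beta}\big|_{\varepsilon=0}$ as defined above. Hence $\dot X$ satisfies the stated equation, with $\dot X_1(t)=\dot X(t-\delta)$.
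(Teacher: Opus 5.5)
Your argument is correct under the regularity you add, but it takes a different route from the paper. The paper gives no proof of this lemma. It presents the linearized equation as the result of formally differentiating \eqref{b1}, written for the control $\hat u+\varepsilon\beta$, with respect to $\varepsilon$ under the $ds$, $dB$ and $\tilde N$ integrals (chain rule on $b,\sigma,\theta$), and defers the justification to \cite{AMOY}.

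That formal route is short and purely algebraic. However, it presupposes that $\varepsilon\mapsto X^{\hat u+\varepsilon\beta}(t)$ is differentiable and that $d/d\varepsilon$ commutes with the stochastic integrals. It also never says in what sense $\dot X$ exists.

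Your difference-quotient argument supplies exactly this missing justification:
\begin{itemize}
\item it shows that $\dot X(t)$ exists as an $L^2(\Omega)$-derivative for each $t$;
\item it identifies $\dot X$ as the unique solution of the linear delayed SVIE;
\item it makes explicit the hypotheses the paper leaves implicit: bounded continuous derivatives of $b,\sigma,\theta$ in $(x,x_1,u)$, and an $L^2(\nu)$ dominating function for the $\theta$-derivatives.
\end{itemize}
Your treatment of the delay through $e^\varepsilon=0$ on $[-\delta,0]$ is fine. So is your use of the It\^o isometries for fixed $t$ on the Volterra kernels, which is legitimate because $s\mapsto b(t,s,\dots)$, $\sigma(t,s,\dots)$, $\theta(t,s,\dots)$ are adapted by H1. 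Convexity of $U$ together with (A1) also keeps the segment between $\hat u(s)$ and $u^\varepsilon(s)$ inside $U$, which your mean value step needs.

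Two small points to tighten:
\begin{enumerate}
\item Use the integral form of the mean value theorem, $\int_0^1\frac{\partial b}{\partial x}\big(t,s,\hat X(s)+\lambda(X^\varepsilon(s)-\hat X(s)),\dots\big)\,d\lambda$, rather than an intermediate point. This keeps the coefficients measurable and adapted.
\item Avoid bounding by $\sup_{t\le T}$ of the remainder. Showing $\sup_{t\le T}R^\varepsilon(t)\to0$ would require continuity of the derivatives uniform in $t$, which you have not assumed. Instead, keep the $t$-dependent remainder inside Gronwall:
\begin{itemize}
\item use $\mathbb E|e^\varepsilon(t)|^2\le C R^\varepsilon(t)+C\int_0^t e^{C(t-s)}R^\varepsilon(s)\,ds$;
\item prove $R^\varepsilon(t)\to0$ for each fixed $t$ by dominated convergence along subsequences on which $X^\varepsilon\to\hat X$ $dP\otimes ds$-a.e., since $L^2$ convergence gives a.e.\ convergence only along subsequences;
\item pass to the limit in the $ds$-integral using the uniform bound on $R^\varepsilon$, which follows from $\dot X\in L^2$ and bounded derivatives.
\end{itemize}
\end{enumerate}
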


\begin{lemma}[Linearized backward equation]
\label{lem:linB}
The processes $(\dot{Y},\dot{Z},\dot{K})$ satisfy
\begin{align*}
\dot{Y}(t) &= \int_t^\infty \Bigg[ \frac{\partial g}{\partial x}(t,s)\dot{X}(s) + \frac{\partial g}{\partial x_1}(t,s)\dot{X}_1(s) + \frac{\partial g}{\partial y}(t,s)\dot{Y}(s) \\
&\quad + \frac{\partial g}{\partial z}(t,s)\dot{Z}(t,s) + \langle \nabla_k g(t,s),\dot{K}(t,s,\cdot) \rangle + \frac{\partial g}{\partial u}(t,s)\beta(s) \Bigg] ds \\
&\quad - \int_t^\infty \dot{Z}(t,s) dB(s) - \int_t^\infty \int_{\mathbb{R}_0} \dot{K}(t,s,e) \tilde{N}(ds,de).
\end{align*}
\end{lemma}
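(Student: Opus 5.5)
The plan is to differentiate the backward equation \eqref{b2} along the perturbed controls $\hat u+\varepsilon\beta$ (admissible for small $\varepsilon$ by (A1)) and pass to the limit inside every integral. Write $Y^\varepsilon,Z^\varepsilon,K^\varepsilon$ for the solution associated with $X^\varepsilon := X^{\hat u+\varepsilon\beta}$. Form the difference quotients
\[
\Delta^\varepsilon Y(t)=\tfrac{1}{\varepsilon}\big(Y^\varepsilon(t)-\hat Y(t)\big),
\]
and similarly $\Delta^\varepsilon Z(t,s)$, $\Delta^\varepsilon K(t,s,e)$, and $\Delta^\varepsilon X$. By Lemma \ref{lem:linF}, $\Delta^\varepsilon X\to\dot X$; I take that convergence in the $\mathcal S^2$-type sense on compacts, together with the delayed version $\Delta^\varepsilon X_1\to\dot X_1$.

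Subtracting the two copies of \eqref{b2} and using the mean value theorem in the differentiable variables of $g$ gives an equation for the quotients. Here $\tilde g^\varepsilon_\cdot(t,s)$ denotes the partial derivatives of $g$ evaluated at intermediate points between the $\hat u$- and $(\hat u+\varepsilon\beta)$-configurations. The equation reads
\begin{align*}
\Delta^\varepsilon Y(t)&=\int_t^\infty\Big[\tilde g^\varepsilon_x\,\Delta^\varepsilon X(s)+\tilde g^\varepsilon_{x_1}\Delta^\varepsilon X_1(s)+\tilde g^\varepsilon_y\,\Delta^\varepsilon Y(s)+\tilde g^\varepsilon_z\,\Delta^\varepsilon Z(t,s)\\
&\qquad+\langle\nabla_k\tilde g^\varepsilon,\Delta^\varepsilon K(t,s,\cdot)\rangle+\tilde g^\varepsilon_u\,\beta(s)\Big]ds-\int_t^\infty\Delta^\varepsilon Z(t,s)\,dB(s)-\int_t^\infty\!\!\int_{\mathbb R_0}\Delta^\varepsilon K(t,s,e)\,\tilde N(ds,de).
\end{align*}
This is a linear infinite-horizon BSVIE of the same form as the claimed limit equation, except that its coefficients depend on $\varepsilon$.

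The candidate limit $(\dot Y,\dot Z,\dot K)$ is defined as the solution of the linear BSVIE in the statement, with coefficients frozen at $\hat u$. Existence and uniqueness for this class of linear BSVIEs is the kind of result promised for Section 5; here I would assume that the derivatives of $g$ are bounded and that $g_y$ provides enough discounting for an a priori estimate to hold in $\mathcal S^2\times\mathbb L^2\times\mathbb H^2_\nu$.

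Next I subtract the limit equation from the difference-quotient equation. The error $(\Delta^\varepsilon Y-\dot Y,\ \Delta^\varepsilon Z-\dot Z,\ \Delta^\varepsilon K-\dot K)$ then solves a linear BSVIE with the $\varepsilon$-coefficients and a residual driver of the form
\[
(\tilde g^\varepsilon_x-\hat g_x)\dot X+\tilde g^\varepsilon_x(\Delta^\varepsilon X-\dot X)+\cdots .
\]
The a priori estimate for linear BSVIEs, which is uniform in $\varepsilon$ because the derivative bounds are uniform, bounds the error norm by the norm of this residual. The residual tends to zero by continuity of the derivatives of $g$, dominated convergence, and the convergence $\Delta^\varepsilon X\to\dot X$ supplied by Lemma \ref{lem:linF}. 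Passing to the limit in the equation then identifies $(\dot Y,\dot Z,\dot K)$ as the $\varepsilon$-derivatives and yields exactly the displayed linearized backward equation.

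The main obstacle is the infinite horizon: the a priori estimate must hold on $[0,\infty)$ uniformly in $\varepsilon$, which does not follow from Lipschitz bounds alone. My first attempt would be a weighted norm $\mathbb E\int_0^\infty e^{\gamma t}|\cdot|^2dt$, with $\gamma$ chosen large relative to the Lipschitz constants of $g$ in $(y,z,k)$, or alternatively a monotonicity condition on $g_y$ in the spirit of Peng--Shi and Yin. With the weighted norm, dominated convergence is applied to the weighted residual. A secondary issue is that the term $\tilde g^\varepsilon_y\,\Delta^\varepsilon Y(s)$ couples different times $s$ to the same $t$. I would handle it in the standard Yong-type way, by first solving for the diagonal process $s\mapsto \Delta^\varepsilon Y(s)$ and then treating the equation for each fixed $t$ as a BSDE in $s$.
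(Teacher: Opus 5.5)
Your route differs from the paper's. The paper gives no argument for this lemma beyond a pointer to \cite{AMOY}. Since $\dot Y,\dot Z,\dot K$ are \emph{defined} as $\varepsilon$-derivatives, it presupposes that they exist and differentiates \eqref{b2} termwise at $\varepsilon=0$: chain rule in the $ds$-integral, and linearity plus the It\^o isometry for the $dB$- and $\tilde N$-integrals. Your difference-quotient/stability scheme is the right way to remove that presupposition, since it would prove the derivatives exist and identify them. But it shifts the whole burden onto a uniform-in-$\varepsilon$ stability estimate for linear BSVIEs on $[0,\infty)$, plus global control of the free terms. You correctly flag this as the main obstacle, but the remedies you sketch do not supply it, so the central step is a genuine gap.

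\emph{(i) Increasing weights.} Constant Lipschitz bounds force an increasing weight $e^{\gamma s}$. Then the free term $\hat g_x\dot X+\hat g_{x_1}\dot X_1+\hat g_u\beta$ and the residual must lie in the weighted space on all of $[0,\infty)$. Bounded derivatives of $g$ do not give this, because $\beta$ is merely bounded and $\dot X$ need not decay. For example, with $b=u$, $\sigma=\theta=0$ and $\beta=\mathbf{1}_{[t_0,t_0+h]}$, one has $\dot X(t)=h$ for $t\ge t_0+h$. You therefore need exponential decay in $s$ of $g_x,g_{x_1},g_u$, matched to the growth of $\dot X$. You also need convergence $\Delta^\varepsilon X\to\dot X$ in the corresponding weighted norm on $[0,\infty)$; convergence on compacts gives no dominating function for the tail. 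The same global stability is needed for $X^\varepsilon\to\hat X$ and $(Y^\varepsilon,Z^\varepsilon,K^\varepsilon)\to(\hat Y,\hat Z,\hat K)$. Otherwise the intermediate coefficients $\tilde g^\varepsilon_\cdot$ need not converge to $\hat g_\cdot$.

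\emph{(ii) Monotonicity.} A Peng--Shi/Yin monotonicity condition on $g_y$ gives no dissipation for a BSVIE. In the BSDE for fixed $t$, It\^o's formula for $|\widetilde{Y}^t(r)|^2$ produces the cross term $\widetilde{Y}^t(r)\,g_y\,Y(r)$, where $Y(r)=\widetilde{Y}^r(r)\neq\widetilde{Y}^t(r)$. This term has no sign, so the $z$- and $k$-terms still cannot be absorbed without a weight.

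\emph{(iii) Section 5.} You cannot borrow Theorem~\ref{thm:existence} as is. Writing $\gamma$ for its weight, its Fubini step turns the diagonal term into $\mathbb{E}\int_0^\infty s\,e^{\gamma s}|\Delta y(s)|^2\,ds$. The $Y$-part $\mathbb{E}\int_0^\infty e^{\gamma t}|Y(t)|^2\,dt$ of its norm does not control this, so the contraction is not established for drivers depending on $Y(s)$; one needs, e.g.\ a strictly stronger weight on the diagonal component.

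What is missing, then, is a consistent choice of weighted space, decay hypotheses on the $x$-, $x_1$- and $u$-derivatives of $g$, and matching global estimates for $\dot X$ and for the nonlinear state as $\varepsilon\to0$.
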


The differential forms of these equations, which are needed in the proof, can be obtained by differentiating with respect to $t$; they involve second-order derivatives of the coefficients and are omitted here for brevity (see \cite{AMOY}).

\subsection{First Variation of the Cost Functional}

\begin{lemma}[First variation formula]
\label{lem:variation}
Under assumptions (A1)-(A2) and the differentiability conditions on the coefficients, we have
\[
\left.\frac{d}{d\epsilon}J(\hat{u}+\epsilon\beta)\right|_{\epsilon=0} = \mathbb{E}\left[ \int_0^\infty \frac{\partial H}{\partial u}(t) \beta(t) dt \right].
\]
\end{lemma}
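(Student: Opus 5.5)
We differentiate the performance functional \eqref{j} under the expectation, then use the adjoint equations \eqref{eq3.2} and \eqref{p} to remove every term containing the variational processes $\dot X,\dot Y,\dot Z,\dot K$. Only the control direction $\beta$ should survive.

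\textbf{Step 1: differentiation.} By (A1), $\hat u+\epsilon\beta\in\mathcal U$ for small $\epsilon$. Assuming enough integrability of $f$ and its derivatives to differentiate under $\mathbb E\int_0^\infty$ (dominated convergence), we get
\[
\left.\frac{d}{d\epsilon}J(\hat u+\epsilon\beta)\right|_{\epsilon=0}=\mathbb E\Big[\int_0^\infty\Big(\frac{\partial f}{\partial x}(t)\dot X(t)+\frac{\partial f}{\partial x_1}(t)\dot X_1(t)+\frac{\partial f}{\partial y}(t)\dot Y(t)+\frac{\partial f}{\partial u}(t)\beta(t)\Big)dt\Big]+\mathbb E[h'(\hat Y(0))\dot Y(0)].
\]
We rewrite $h'(\hat Y(0))\dot Y(0)=\hat\lambda(0)\dot Y(0)$. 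Using $\dot X(t)=0$ on $[-\delta,0]$ and the shift $t\mapsto t+\delta$, we replace $\frac{\partial f}{\partial x_1}(t)\dot X_1(t)$ by $\frac{\partial f}{\partial x_1}(t+\delta)\dot X(t)$. This is exactly how the term $\frac{\partial H}{\partial x_1}(t+\delta)$ enters $\varkappa$.

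\textbf{Step 2: the backward part.} We localize exactly as in Step 1 of the proof of the Sufficient Maximum Principle, working on $[0,T]$ with stopping times $\tau_n$. We apply It\^o's formula to $\hat\lambda(t)\dot Y(t)$, using \eqref{eq3.2} and the differential form of Lemma~\ref{lem:linB}; the latter is obtained from Lemma~\ref{lem:linB} as \eqref{eq:Y_diff} was obtained from \eqref{b2}. This expresses $\mathbb E[\hat\lambda(0)\dot Y(0)]$ as $\mathbb E[\hat\lambda(T)\dot Y(T)]$ plus $dt$-integrals. The contributions of the $\dot Y$, $\dot Z(t,t)$ and $\dot K(t,t,\cdot)$ terms are paired against $\frac{\partial H}{\partial y}$, $\frac{\partial H}{\partial z}$ and $\nabla_kH$ from \eqref{eq3.2}. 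The Volterra terms $\int_t^\infty\frac{\partial g}{\partial t}(t,s)\cdots ds$ are handled by Fubini, swapping $(t,s)$. After the swap they become the $-\int_0^t(\cdots)\lambda(s)ds$ pieces of $H_1$ in \eqref{h1}. The anticipating terms $\int_t^\infty\frac{\partial\dot Z}{\partial t}(t,s)dB(s)$ and the corresponding $\tilde N$-terms are converted into Lebesgue integrals using the duality relations of Section~\ref{sec:malliavin}.

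\textbf{Step 3: the forward part.} Similarly, we apply It\^o's formula to $\hat p(t)\dot X(t)$, using \eqref{p} and the differential form of Lemma~\ref{lem:linF}, which is analogous to \eqref{eq3.6}. The $b(t,t)$, $\sigma(t,t)$ and $\theta(t,t)$ terms are matched with $H_0$, using $\hat p$, the diagonal $\hat q(t,t)$ and $\hat r(t,t,\cdot)$. The terms $\int_0^t\frac{\partial b}{\partial t}(t,s)\dot X(s)ds$ are Fubini-swapped into the $\int_t^\infty\frac{\partial b}{\partial s}(s,t)p(s)ds$ part of $H_1$. The stochastic Volterra terms $\int_0^t\frac{\partial\sigma}{\partial t}(t,s)\dot X(s)dB(s)$, multiplied by $\hat p(t)$, are treated by the duality formula combined with the martingale representation of $\hat p$. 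This produces $\hat q(t,s)$ and gives the $\frac{\partial\sigma}{\partial s}(s,t)q(s,t)$ term. The drift correction terms $\int_t^\infty\frac{\partial q}{\partial t}(t,s)dB(s)$ in \eqref{p} are designed to cancel the remaining cross terms. Assumption H2 justifies the interchange of integrals. The drift $\mathbb E[\varkappa(t)\mid\mathcal F_t]$ can replace $\varkappa(t)$ after taking expectations, because $\dot X(t)$ is $\mathcal F_t$-measurable.

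\textbf{Step 4: conclusion.} Summing Steps 1--3, every coefficient of $\dot X$, $\dot Y$, $\dot Z$, $\dot K$ cancels against the corresponding partial derivative of $H=H_0+H_1$. What remains is
\[
\mathbb E\Big[\int_0^T\frac{\partial H}{\partial u}(t)\beta(t)dt\Big]+\mathbb E[\hat\lambda(T)\dot Y(T)]-\mathbb E[\hat p(T)\dot X(T)].
\]
Letting $n\to\infty$ requires a transversality condition of variational type, $\lim_{T\to\infty}\mathbb E[\hat\lambda(T)\dot Y(T)]=\lim_{T\to\infty}\mathbb E[\hat p(T)\dot X(T)]=0$, which we impose as part of ``the differentiability conditions''. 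Together with dominated convergence this yields the formula.

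The main obstacle is Step 3 and the anticipating part of Step 2: correctly pairing the Volterra stochastic integrals with the two-parameter adjoints $q(t,s)$ and $r(t,s,e)$ through Malliavin duality, so that the result reproduces exactly $H_1$ and the correction terms in \eqref{p}. I would first check these cancellations in the pure Brownian, no-delay case and then add jumps and the delay shift.
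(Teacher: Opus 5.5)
Your proposal follows the paper's own proof: differentiate $J$, apply It\^o's formula to $\hat\lambda(t)\dot Y(t)$ and $\hat p(t)\dot X(t)$ after localization, and use the adjoint equations, Fubini and the duality relations to cancel every term in $\dot X,\dot X_1,\dot Y,\dot Z,\dot K$, leaving $\mathbb{E}\big[\int_0^\infty \frac{\partial H}{\partial u}(t)\beta(t)\,dt\big]$. You are also right that letting $T\to\infty$ requires the variational transversality conditions $\mathbb{E}[\hat\lambda(T)\dot Y(T)]\to 0$ and $\mathbb{E}[\hat p(T)\dot X(T)]\to 0$; the paper's proof uses these silently and only states them as hypotheses of the Necessary Maximum Principle.

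One caveat: when you carry out your delay shift, the $\dot X_1$ terms cancel only if the drift of $\hat p$ is $\mathbb{E}\big[-\frac{\partial H}{\partial x}(t)-\frac{\partial H}{\partial x_1}(t+\delta)\,\big|\,\mathcal{F}_t\big]$. The $+$ sign in $\varkappa$ as printed is therefore a typo, and your claim that this is ``exactly how'' the delay term enters $\varkappa$ should not take it at face value.
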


\begin{proof}
We compute
\[
\left.\frac{d}{d\epsilon}J(\hat{u}+\epsilon\beta)\right|_{\epsilon=0} = I_1 + I_2,
\]
where
\begin{align*}
I_1 &= \mathbb{E}\left[ \int_0^\infty \left( \frac{\partial f}{\partial x}(t)\dot{X}(t) + \frac{\partial f}{\partial x_1}(t)\dot{X}_1(t) + \frac{\partial f}{\partial y}(t)\dot{Y}(t) + \frac{\partial f}{\partial u}(t)\beta(t) \right) dt \right], \\
I_2 &= \mathbb{E}\left[ \frac{\partial h}{\partial y}(\hat{Y}(0))\dot{Y}(0) \right] = \mathbb{E}[ \hat{\lambda}(0)\dot{Y}(0) ].
\end{align*}

\emph{Step 1: Express $I_2$ via $\hat{\lambda}$.}
Applying Itô’s formula to $\hat{\lambda}(t)\dot{Y}(t)$ and using the adjoint equation for $\hat{\lambda}$ together with Lemma \ref{lem:linB}, we obtain after localization and letting $T\to\infty$:
\begin{align*}
I_2 &= - \mathbb{E}\left[ \int_0^\infty \hat{\lambda}(t) \left( \frac{\partial g}{\partial x}(t,t)\dot{X}(t) + \frac{\partial g}{\partial x_1}(t,t)\dot{X}_1(t) + \frac{\partial g}{\partial u}(t,t)\beta(t) \right) dt \right] \\
&\quad + \mathbb{E}\left[ \int_0^\infty \dot{Y}(t) \frac{\partial H}{\partial y}(t) dt \right] + \mathbb{E}\left[ \int_0^\infty \frac{\partial H}{\partial z}(t) \dot{Z}(t,t) dt \right] \\
&\quad + \mathbb{E}\left[ \int_0^\infty \int_{\mathbb{R}_0} \nabla_k H(t) \dot{K}(t,t,e) \nu(de) dt \right].
\end{align*}

\emph{Step 2: Express $I_1$ via $\hat{p}$.}
Similarly, applying Itô’s formula to $\hat{p}(t)\dot{X}(t)$ and using the adjoint equation for $\hat{p}$ together with Lemma \ref{lem:linF} yields, after letting $T\to\infty$,
\begin{align*}
0 &= \mathbb{E}\left[ \int_0^\infty \frac{\partial H}{\partial x}(t) \dot{X}(t) dt \right] + \mathbb{E}\left[ \int_0^\infty \frac{\partial H}{\partial x_1}(t) \dot{X}_1(t) dt \right] \\
&\quad + \mathbb{E}\left[ \int_0^\infty \left( \hat{p}(t)\frac{\partial b}{\partial u}(t,t) + \hat{q}(t,t)\frac{\partial \sigma}{\partial u}(t,t) + \int_{\mathbb{R}_0} \hat{r}(t,t,e)\frac{\partial \theta}{\partial u}(t,t,e)\nu(de) \right) \beta(t) dt \right] \\
&\quad + \mathbb{E}\left[ \int_0^\infty \int_t^\infty \left( \hat{p}(s)\frac{\partial^2 b}{\partial s\partial u}(s,t) + \hat{q}(s,t)\frac{\partial^2 \sigma}{\partial s\partial u}(s,t) + \int_{\mathbb{R}_0} \hat{r}(s,t,e)\frac{\partial^2 \theta}{\partial s\partial u}(s,t,e)\nu(de) \right) ds \beta(t) dt \right] \\
&\quad - \mathbb{E}\left[ \int_0^\infty \hat{\lambda}(t) \frac{\partial g}{\partial x}(t,t) \dot{X}(t) dt \right] - \mathbb{E}\left[ \int_0^\infty \hat{\lambda}(t) \frac{\partial g}{\partial x_1}(t,t) \dot{X}_1(t) dt \right] \\
&\quad - \mathbb{E}\left[ \int_0^\infty \int_0^t \frac{\partial^2 g}{\partial s\partial x}(s,t) \hat{\lambda}(s) ds \dot{X}(t) dt \right] - \mathbb{E}\left[ \int_0^\infty \int_0^t \frac{\partial^2 g}{\partial s\partial x_1}(s,t) \hat{\lambda}(s) ds \dot{X}_1(t) dt \right].
\end{align*}

\emph{Step 3: Combine $I_1$ and $I_2$.}
Adding the expressions for $I_1$ and $I_2$, all terms involving $\dot{X}$, $\dot{X}_1$, $\dot{Y}$, $\dot{Z}$, and $\dot{K}$ cancel because of the definition of the Hamiltonian derivatives. The remaining terms give exactly
\[
I_1 + I_2 = \mathbb{E}\left[ \int_0^\infty \frac{\partial H}{\partial u}(t) \beta(t) dt \right].
\]
\end{proof}

\begin{theorem}[Necessary Maximum Principle]
\label{thm:necessary}
Let $\hat{u} \in \mathcal{U}$ be an optimal control with corresponding solutions $\hat{X}(t)$, $(\hat{Y}(t),\hat{Z}(t,\cdot),\hat{K}(t,\cdot,\cdot))$, $\hat{p}(t)$, $\hat{q}(t,\cdot)$, $\hat{r}(t,\cdot,\cdot)$ and $\hat{\lambda}(t)$ of the state and adjoint equations respectively. Suppose the following transversality conditions hold:
\[
\varlimsup_{T\to\infty}\mathbb{E}[\hat{p}(T)\dot{X}(T)] = 0 \quad \text{and} \quad \varlimsup_{T\to\infty}\mathbb{E}[\hat{\lambda}(T)\dot{Y}(T)] = 0.
\]

Then,
\[
\mathbb{E}\left[ \left.\frac{\partial H}{\partial u}(t)\right| \mathcal{G}_t \right]_{u=\hat{u}} = 0 \quad \text{for a.e. } t \geq 0.
\]
\end{theorem}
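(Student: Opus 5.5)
The plan is the usual variational argument for partial-information necessary maximum principles, built on Lemma~\ref{lem:variation}. Since $\hat{u}$ is optimal and, by (A1), $\hat{u}+\epsilon\beta\in\mathcal{U}$ for every bounded $\beta\in\mathcal{U}$ and all small $\epsilon\neq0$, the real function $\epsilon\mapsto J(\hat{u}+\epsilon\beta)$ has an interior maximum at $\epsilon=0$. Provided it is differentiable there, its derivative vanishes. Lemma~\ref{lem:variation} then gives
\[
0=\left.\frac{d}{d\epsilon}J(\hat{u}+\epsilon\beta)\right|_{\epsilon=0}=\mathbb{E}\left[\int_0^\infty \frac{\partial H}{\partial u}(t)\beta(t)\,dt\right].
\]
The transversality conditions $\varlimsup_{T\to\infty}\mathbb{E}[\hat{p}(T)\dot{X}(T)]=0$ and $\varlimsup_{T\to\infty}\mathbb{E}[\hat{\lambda}(T)\dot{Y}(T)]=0$ enter exactly here. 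In Steps 1 and 2 of the proof of Lemma~\ref{lem:variation}, the Itô formula on $[0,T\wedge\tau_n]$ for $\hat{\lambda}\dot{Y}$ and $\hat{p}\dot{X}$ produces boundary terms $\mathbb{E}[\hat{\lambda}(T)\dot{Y}(T)]$ and $\mathbb{E}[\hat{p}(T)\dot{X}(T)]$. I would state explicitly that these terms vanish in the limit by hypothesis, so that the identity above holds with no remainder.

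Next I localize the variation using (A2). I fix $t_0>0$, $h>0$ and a bounded $\mathcal{G}_{t_0}$-measurable $\alpha$, and set $\beta(t)=\alpha\mathbf{1}_{[t_0,t_0+h]}(t)$, which is admissible and bounded. The identity becomes
\[
\mathbb{E}\left[\int_{t_0}^{t_0+h}\frac{\partial H}{\partial u}(t)\,dt\;\alpha\right]=0 .
\]
I divide by $h$ and let $h\downarrow0$. Assuming $t\mapsto\mathbb{E}[\frac{\partial H}{\partial u}(t)\alpha]$ is locally integrable, which follows from the integrability conditions that make $J$ finite and the lemma meaningful, the Lebesgue differentiation theorem gives $\mathbb{E}[\frac{\partial H}{\partial u}(t_0)\alpha]=0$ for a.e.\ $t_0$. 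I would make this hold simultaneously for all $\alpha$ by running through a countable family of $\alpha$'s that generates $L^2(\mathcal{G}_{t_0})$, or alternatively by using càdlàg paths together with right-continuity of $\mathbb{G}$. Since $\alpha$ is an arbitrary bounded $\mathcal{G}_{t_0}$-measurable variable, the tower property gives $\mathbb{E}[\frac{\partial H}{\partial u}(t_0)\mid\mathcal{G}_{t_0}]=0$ a.s. for a.e.\ $t_0$, evaluated at $u=\hat{u}$, which is the claim.

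I expect the main obstacle to be justifying the first step. One needs the differentiability of $\epsilon\mapsto J(\hat{u}+\epsilon\beta)$ and the interchange of $d/d\epsilon$ with $\mathbb{E}\int_0^\infty$ on an infinite horizon. This requires that the difference quotients of $X$, $Y$, $Z$, $K$ converge in the norms of Section~\ref{sec:spaces} to $\dot{X}$, $\dot{Y}$, $\dot{Z}$, $\dot{K}$, together with a uniform integrability bound on $f_x,f_{x_1},f_y,f_u$ along the perturbed trajectories. I would take this as part of the standing differentiability hypotheses behind Lemma~\ref{lem:variation}, and I would apply dominated convergence under the localization by $\tau_n$ before letting $n\to\infty$. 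The second delicate point is the measure-zero exceptional set of $t_0$, which depends on $\alpha$; the countable-family argument above handles it.
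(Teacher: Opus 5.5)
Your proposal is correct and follows essentially the same route as the paper: the first variation vanishes by Lemma~\ref{lem:variation}, you take the localized perturbation $\beta=\alpha\mathbf{1}_{[t_0,t_0+h]}$ from (A2), apply Lebesgue differentiation, and use that $\alpha$ is arbitrary. Your explicit remarks on where the transversality conditions remove the boundary terms and on the $\alpha$-dependent null set are more careful than the paper's text, but for the latter the countable family should not depend on $t_0$: take bounded $\alpha$ that are $\mathcal{G}_s$-measurable for rational $s$, note that $t\mapsto\mathbb{E}[\frac{\partial H}{\partial u}(t)\alpha]$ then integrates to zero over every interval in $[s,\infty)$, and conclude for $\mathcal{G}_{t-}$, which coincides with $\mathcal{G}_t$ up to null sets for all but countably many $t$.
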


\begin{proof}
By Lemma \ref{lem:variation}, optimality implies
\[
\mathbb{E}\left[ \int_0^\infty \frac{\partial H}{\partial u}(t) \beta(t) dt \right] = 0
\]
for all bounded $\mathcal{G}_t$-adapted $\beta$. In particular, for any $t_0\ge0$ and $h>0$, choose $\beta(t)=\alpha\mathbf{1}_{[t_0,t_0+h]}(t)$ with $\alpha$ bounded $\mathcal{G}_{t_0}$-measurable. Then
\[
\mathbb{E}\left[ \alpha \int_{t_0}^{t_0+h} \frac{\partial H}{\partial u}(t) dt \right] = 0.
\]
Dividing by $h$ and letting $h\downarrow 0$, Lebesgue’s differentiation theorem yields
\[
\mathbb{E}\left[ \alpha \frac{\partial H}{\partial u}(t_0) \right] = 0 \quad \text{a.e. } t_0.
\]
Since $\alpha$ is arbitrary, we conclude $\mathbb{E}[ \frac{\partial H}{\partial u}(t_0) | \mathcal{G}_{t_0} ] = 0$ for almost every $t_0$.
\end{proof}

\section{Existence and uniqueness for infinite-horizon BSVIEs}

\subsection{Problem formulation}

We consider the infinite-horizon backward stochastic Volterra integral equation with jumps
\begin{equation}\label{BSVIE-inf}
\begin{aligned}
Y(t)
&= \int_t^\infty g\big(t,s,Y(s),Z(t,s),K(t,s,\cdot)\big)\,ds \\
&\quad - \int_t^\infty Z(t,s)\,dB(s)
      - \int_t^\infty\!\!\int_{\mathbb R_0} K(t,s,e)\,\tilde N(ds,de),
\qquad t\ge 0 .
\end{aligned}
\end{equation}
Following the approach in Section 3 developed for the finite-horizon problem in \cite{agram2014}, we transform the problem into a set of standard Backward Stochastic Differential Equations and address them using a fixed-point method. Our analysis extends the techniques in Hamaguchi \cite{Hamaguchi2021} to incorporate jumps.

\subsection{Functional framework}

For any $\beta > 0$, let $\triangle_\infty := \{(t,s) \in [0,\infty)^2 : t \leq s\}$ and define $H_{\triangle}^{2,\beta}[0,\infty)$ as the space of all processes $(Y,Z,K)$ such that:
\begin{itemize}
    \item $Y: [0,\infty) \times \Omega \rightarrow \mathbb{R}$ is $\mathbb{F}$-adapted,
    \item $Z: \triangle_\infty \times \Omega \rightarrow \mathbb{R}$ with $s \mapsto Z(t,s)$ being $\mathbb{F}$-adapted on $[t,\infty)$,
    \item $K: \triangle_\infty \times \mathbb{R}_0 \times \Omega \rightarrow \mathbb{R}$ with $s \mapsto K(t,s,\cdot)$ being $\mathbb{F}$-adapted on $[t,\infty)$,
\end{itemize}
equipped with the norm
\begin{align*}
\|(Y,Z,K)\|_{H_{\triangle}^{2,\beta}[0,\infty)}^2 
&:= \mathbb{E}\int_0^\infty \Big[ e^{\beta t}|Y(t)|^2 + \int_t^\infty e^{\beta s}|Z(t,s)|^2 ds \\
&\quad + \int_t^\infty \int_{\mathbb{R}_0} e^{\beta s}|K(t,s,e)|^2 \nu(de)ds \Big] dt.
\end{align*}

\subsection{Assumptions (A)}
\label{sec:assumptionsA}
We make the following assumptions:

\begin{enumerate}
    \item The function $g: [0,\infty)^2 \times \mathbb{R}^3 \times L^2(\nu) \times \Omega \rightarrow \mathbb{R}$ satisfies:
    \begin{enumerate}
        \item $\mathbb{E}\left[ \int_0^\infty \left( \int_t^\infty e^{\beta s}|g(t,s,0,0,0)|ds \right)^2 dt \right] < \infty$ for some $\beta > 0$,
        \item There exists a constant $L > 0$ such that for all $t,s \in [0,\infty)$,
        \[
        |g(t,s,y,z,k) - g(t,s,y',z',k')| \leq L\left(|y-y'| + |z-z'| + \|k-k'\|_{L^2(\nu)}\right).
        \]
    \end{enumerate}
    \item The parameter $\beta$ is chosen sufficiently large (specifically, $\beta > 6L^2$ as will be shown in the proof).
\end{enumerate}

\begin{theorem}\label{thm:existence}
Under assumptions (A), there exists a unique solution $(Y,Z,K) \in H_{\triangle}^{2,\beta}[0,\infty)$ to the infinite-horizon BSVIE \eqref{BSVIE-inf}.
\end{theorem}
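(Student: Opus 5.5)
The proof goes by a contraction argument on $H_{\triangle}^{2,\beta}[0,\infty)$, following Hamaguchi's decoupling strategy.

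\textbf{Step 1: the solution map.} Fix $(y,z,k)\in H_{\triangle}^{2,\beta}[0,\infty)$. For each fixed $t\ge0$, consider the infinite-horizon BSDE with jumps on $[t,\infty)$, in the variable $r$:
\[
\eta^t(r)=\int_r^\infty g\big(t,s,y(s),z(t,s),k(t,s,\cdot)\big)\,ds-\int_r^\infty Z(t,s)\,dB(s)-\int_r^\infty\!\!\int_{\mathbb R_0}K(t,s,e)\,\tilde N(ds,de),
\qquad r\ge t .
\]
Its driver does not depend on the unknowns, so the solution is explicit. We set $\eta^t(r)=\mathbb E[\int_r^\infty g(t,s,\cdot)\,ds\mid\mathcal F_r]$ and obtain $(Z(t,\cdot),K(t,\cdot,\cdot))$ from the martingale representation theorem for $(B,\tilde N)$ applied to the random variable $\int_t^\infty g(t,s,\cdot)\,ds$. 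This variable is square-integrable for a.e. $t$: by Assumption (A1)(a) and the Lipschitz condition,
\[
\Big(\int_t^\infty|g|\,ds\Big)^2\le 2\Big(\int_t^\infty e^{\beta s}|g_0|\,ds\Big)^2+C\Big(\int_t^\infty e^{-\beta s}\,ds\Big)\int_t^\infty e^{\beta s}\big(|y|^2+|z|^2+\|k\|^2\big)\,ds,
\]
by Cauchy--Schwarz with weight $e^{\beta s}$. We then define $Y(t):=\eta^t(t)$ and $\Phi(y,z,k):=(Y,Z,K)$. A fixed point of $\Phi$ is exactly a solution of \eqref{BSVIE-inf}, and conversely any solution is a fixed point. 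Uniqueness therefore also follows from the contraction property. Joint measurability in $t$ has to be checked; I would handle it by choosing a measurable version of the representation, as in Hamaguchi.

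\textbf{Step 2: weighted a priori estimate.} For two inputs, let $\Delta g(s)$ denote the difference of drivers, and write $\Delta\eta^t,\Delta Z,\Delta K$ for the differences of outputs. Applying It\^o's formula to $e^{\beta r}|\Delta\eta^t(r)|^2$ on $[t,T]$, using localization and letting $T\to\infty$, gives
\[
\mathbb E\Big[e^{\beta t}|\Delta\eta^t(t)|^2+\int_t^\infty e^{\beta s}\big(\beta|\Delta\eta^t(s)|^2+|\Delta Z(t,s)|^2+\|\Delta K(t,s)\|^2\big)ds\Big]\le \mathbb E\int_t^\infty 2e^{\beta s}|\Delta\eta^t(s)||\Delta g(s)|\,ds .
\]
The vanishing of the boundary term $e^{\beta T}|\Delta\eta^t(T)|^2$ needs justification; I would get it from the conditional expectation formula combined with the same Cauchy--Schwarz bound. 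Young's inequality with $2ab\le\beta a^2+\beta^{-1}b^2$ absorbs the $\Delta\eta$ terms. Using $|\Delta g|^2\le 3L^2(|\Delta y|^2+|\Delta z|^2+\|\Delta k\|^2)$ then yields
\[
\mathbb E\Big[e^{\beta t}|\Delta Y(t)|^2+\int_t^\infty e^{\beta s}\big(|\Delta Z|^2+\|\Delta K\|^2\big)ds\Big]\le\frac{3L^2}{\beta}\,\mathbb E\int_t^\infty e^{\beta s}\big(|\Delta y(s)|^2+|\Delta z(t,s)|^2+\|\Delta k(t,s)\|^2\big)ds .
\]

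\textbf{Step 3: integrate in $t$ and contract.} Integrating the estimate over $t\in[0,\infty)$ controls the $z,k$ parts directly by the corresponding parts of the norm. The $y$ term is the main obstacle: $\int_0^\infty\int_t^\infty e^{\beta s}|\Delta y(s)|^2\,ds\,dt=\int_0^\infty s\,e^{\beta s}|\Delta y(s)|^2\,ds$ carries an extra factor $s$ that the norm does not control. I would first try to avoid the double integral. Since $\Delta Y(t)$ is a conditional expectation of $\int_t^\infty\Delta g\,ds$, one can estimate it directly:
\[
e^{\beta t}\,\mathbb E|\Delta Y(t)|^2\le e^{\beta t}\int_t^\infty e^{-\beta' s}\,ds\;\mathbb E\int_t^\infty e^{\beta' s}|\Delta g|^2\,ds .
\]
Taking $\beta'<\beta$, say $\beta'=\beta/2$, makes the prefactor $2e^{\beta t/2}/\beta$. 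Integrating in $t$ with Fubini, $\int_0^s e^{\beta t/2}\,dt\le 2e^{\beta s/2}/\beta$, which produces exactly the weight $e^{\beta s}$ with factor $4/\beta^2$. The $y$-part of the Lipschitz bound is thus controlled by $C L^2\beta^{-2}\|\Delta y\|^2$. The $z,k$ parts of $\Delta g$ enter this $Y$-estimate the same way and are also bounded by $CL^2\beta^{-2}$ times their norms.

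Combining, $\|\Phi(y,z,k)-\Phi(y',z',k')\|^2\le c(\beta)\|(\Delta y,\Delta z,\Delta k)\|^2$ with $c(\beta)$ of order $L^2/\beta$. This is $<1$ once $\beta>6L^2$, up to adjusting the constants in the Young splits. Banach's fixed point theorem then gives existence and uniqueness in $H_{\triangle}^{2,\beta}[0,\infty)$. The same Step 2 estimate, applied with second input $0$ and using Assumption (A1)(a), shows that $\Phi$ maps the space into itself.
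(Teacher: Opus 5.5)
\textbf{The gap: the $y$-term in the $Z$ and $K$ components.} Your Step 3 repairs the $y$-term only in the $\Delta Y$-component of the output norm. The $\Delta Z$ and $\Delta K$ components are still bounded only through your Step 2 estimate. Integrating that estimate in $t$ leaves on the right-hand side
\[
\frac{3L^2}{\beta}\,\mathbb E\int_0^\infty\!\!\int_t^\infty e^{\beta s}|\Delta y(s)|^2\,ds\,dt=\frac{3L^2}{\beta}\,\mathbb E\int_0^\infty s\,e^{\beta s}|\Delta y(s)|^2\,ds,
\]
which is exactly the quantity you identified as uncontrolled. A sharper inequality cannot fix this.

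Take $g(t,s,y,z,k)=Ly$, so (A) holds with $g(t,s,0,0,0)=0$. Take $z=k=0$ and $y(s)=\sum_{n\ge1}c_n\mathbf 1_{[n+1,n+2)}(s)\,(B(n+1)-B(n))$ with $c_n=e^{-\beta n/2}/n$. Then $\mathbb E\int_0^\infty e^{\beta s}|y(s)|^2ds=\beta^{-1}e^{\beta}(e^\beta-1)\sum_n n^{-2}<\infty$. Your map returns $Z(t,s)=L\sum_n c_n\mathbf 1_{[n,n+1)}(s)$ for every $t\le s$, so
\[
\mathbb E\int_0^\infty\!\!\int_t^\infty e^{\beta s}|Z(t,s)|^2\,ds\,dt=L^2\sum_n c_n^2\int_n^{n+1}s\,e^{\beta s}\,ds\ \ge\ L^2\,\frac{e^\beta-1}{\beta}\sum_n\frac1n=\infty .
\]
Hence $\Phi$ does not map $H_{\triangle}^{2,\beta}[0,\infty)$ into itself for any $\beta$. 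Your closing self-map claim is therefore false, and no contraction of the full triple in this norm can exist.

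The paper's Steps 2--3 have the same defect: they silently identify $\mathbb E\int_0^\infty\int_t^\infty e^{\beta s}|\Delta y(s)|^2\,ds\,dt$ with the $y$-part of the norm, so the repair is not there either. A minor slip on your side: the $z,k$ contributions to your $\Delta Y$ bound are of order $L^2/\beta$, not $L^2/\beta^2$, because $z(t,s)$ depends on $t$ and Fubini gains nothing. This is harmless.

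\textbf{A possible repair.} Make $Y$ alone the fixed-point variable. Given $y$, for each $t$ solve the BSDE on $[t,\infty)$ whose driver $g(t,s,y(s),Z(t,s),K(t,s,\cdot))$ depends on its own unknowns; there is no need to freeze $z,k$. Set $\Psi(y)(t):=\eta^t(t)$. Your half-weight idea, run through It\^o's formula with weight $e^{\beta s/2}$ (absorbing the $Z,K$ terms needs $\beta\ge16L^2$), gives
\[
e^{\beta t}\,\mathbb E|\Delta Y(t)|^2\le\frac{4L^2}{\beta}\,e^{\beta t/2}\,\mathbb E\int_t^\infty e^{\beta s/2}|\Delta y(s)|^2\,ds .
\]
So $\Psi$ is a contraction with constant $8L^2/\beta^2$, both for the weight $e^{\beta t}$ and for the weight $(1+t)e^{\beta t}$. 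Assumption (A)(a) puts $\Psi(0)$ in the latter space.

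The fixed point therefore satisfies $\mathbb E\int_0^\infty s\,e^{\beta s}|Y(s)|^2\,ds<\infty$. Together with the per-$t$ BSDE estimate and (A)(a), this makes the $Z$- and $K$-parts of the $H_{\triangle}^{2,\beta}$-norm of the solution finite. Uniqueness in $H_{\triangle}^{2,\beta}[0,\infty)$ then follows from the contraction in the $e^{\beta t}$-weight together with uniqueness for each $t$-indexed BSDE.
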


\begin{proof}
We prove the theorem using a fixed-point argument in the Banach space $H_{\triangle}^{2,\beta}[0,\infty)$.

\textbf{Step 1: Setup of the fixed-point mapping.}
For a given triple $(y,z,k) \in H_{\triangle}^{2,\beta}[0,\infty)$, consider the family of infinite-horizon BSDEs parameterized by $t \geq 0$:
\begin{equation}\label{BSDE-family}
\begin{aligned}
\widetilde{Y}^t(r) &= \int_r^\infty g(t,s,y(s),z(t,s),k(t,s,\cdot))ds \\
&\quad - \int_r^\infty \widetilde{Z}^t(s)dB(s) - \int_r^\infty \int_{\mathbb{R}_0} \widetilde{K}^t(s,e)\tilde{N}(ds,de), \quad r \geq t.
\end{aligned}
\end{equation}
For each fixed $t$, this is a standard infinite-horizon BSDE with jumps. Under our assumptions, it admits a unique adapted solution $(\widetilde{Y}^t(\cdot), \widetilde{Z}^t(\cdot), \widetilde{K}^t(\cdot,\cdot))$ satisfying the estimate:
\begin{align*}
\mathbb{E}&\left[ \sup_{r\geq t} e^{\beta r}|\widetilde{Y}^t(r)|^2 + \int_t^\infty e^{\beta s}|\widetilde{Z}^t(s)|^2 ds + \int_t^\infty \int_{\mathbb{R}_0} e^{\beta s}|\widetilde{K}^t(s,e)|^2 \nu(de)ds \right] \\
&\leq C \mathbb{E}\left[ \left( \int_t^\infty e^{\beta s/2}|g(t,s,y(s),z(t,s),k(t,s,\cdot))|ds \right)^2 \right],
\end{align*}
for some constant $C > 0$ independent of $t$.

Now define:
\[
Y(t) = \widetilde{Y}^t(t), \quad Z(t,s) = \widetilde{Z}^t(s), \quad K(t,s,\cdot) = \widetilde{K}^t(s,\cdot), \quad \text{for } (t,s) \in \triangle_\infty.
\]
Then $(Y,Z,K)$ satisfies equation \eqref{BSVIE-inf} for the given $(y,z,k)$.

\textbf{Step 2: A priori estimate.}
Applying It\^o formula to $e^{\beta s}|\widetilde{Y}^t(s)|^2$ on $[t,T]$ and letting $T \to \infty$, we obtain after taking expectations:
\begin{align*}
&\mathbb{E}\left[ e^{\beta t}|Y(t)|^2 + \int_t^\infty e^{\beta s}\left( \beta|\widetilde{Y}^t(s)|^2 + |\widetilde{Z}^t(s)|^2 + \|\widetilde{K}^t(s,\cdot)\|_{L^2(\nu)}^2 \right) ds \right] \\
&= \mathbb{E}\left[ 2\int_t^\infty e^{\beta s}\widetilde{Y}^t(s)g(t,s,y(s),z(t,s),k(t,s,\cdot))ds \right].
\end{align*}
Using Young's inequality $2ab \leq \frac{\beta}{2}a^2 + \frac{2}{\beta}b^2$, we get:
\begin{align*}
&2|\widetilde{Y}^t(s)g(t,s,y(s),z(t,s),k(t,s,\cdot))| \\
&\leq \frac{\beta}{2}|\widetilde{Y}^t(s)|^2 + \frac{2}{\beta}|g(t,s,y(s),z(t,s),k(t,s,\cdot))|^2.
\end{align*}
Thus,
\begin{align*}
&\mathbb{E}\left[ e^{\beta t}|Y(t)|^2 + \int_t^\infty e^{\beta s}\left( \frac{\beta}{2}|\widetilde{Y}^t(s)|^2 + |\widetilde{Z}^t(s)|^2 + \|\widetilde{K}^t(s,\cdot)\|_{L^2(\nu)}^2 \right) ds \right] \\
&\leq \frac{2}{\beta}\mathbb{E}\left[ \int_t^\infty e^{\beta s}|g(t,s,y(s),z(t,s),k(t,s,\cdot))|^2 ds \right].
\end{align*}
By the Lipschitz condition and the inequality $(a+b)^2 \leq 2(a^2 + b^2)$,
\begin{align*}
|g(t,s,y,z,k)|^2 &\leq 2|g(t,s,0,0,0)|^2 + 2L^2\left(|y|^2 + |z|^2 + \|k\|_{L^2(\nu)}^2\right).
\end{align*}
Therefore,
\begin{align*}
&\mathbb{E}\left[ e^{\beta t}|Y(t)|^2 + \int_t^\infty e^{\beta s}\left( \frac{\beta}{2}|\widetilde{Y}^t(s)|^2 + |\widetilde{Z}^t(s)|^2 + \|\widetilde{K}^t(s,\cdot)\|_{L^2(\nu)}^2 \right) ds \right] \\
&\leq \frac{4}{\beta}\mathbb{E}\left[ \int_t^\infty e^{\beta s}|g(t,s,0,0,0)|^2 ds \right] \\
&\quad + \frac{4L^2}{\beta}\mathbb{E}\left[ \int_t^\infty e^{\beta s}\left(|y(s)|^2 + |z(t,s)|^2 + \|k(t,s,\cdot)\|_{L^2(\nu)}^2\right) ds \right].
\end{align*}
Integrating with respect to $t$ from $0$ to $\infty$ and using Fubini's theorem yields:
\begin{align*}
&\mathbb{E}\int_0^\infty e^{\beta t}|Y(t)|^2 dt + \mathbb{E}\int_0^\infty \int_t^\infty e^{\beta s}\left( \frac{\beta}{2}|\widetilde{Y}^t(s)|^2 + |\widetilde{Z}^t(s)|^2 + \|\widetilde{K}^t(s,\cdot)\|_{L^2(\nu)}^2 \right) ds dt \\
&\leq \frac{4}{\beta}\mathbb{E}\int_0^\infty \int_t^\infty e^{\beta s}|g(t,s,0,0,0)|^2 ds dt \\
&\quad + \frac{4L^2}{\beta}\mathbb{E}\int_0^\infty \int_t^\infty e^{\beta s}\left(|y(s)|^2 + |z(t,s)|^2 + \|k(t,s,\cdot)\|_{L^2(\nu)}^2\right) ds dt.
\end{align*}
This shows that $(Y,Z,K) \in H_{\triangle}^{2,\beta}[0,\infty)$ and the mapping $\Phi: (y,z,k) \mapsto (Y,Z,K)$ is well-defined.

\textbf{Step 3: Contraction property.}
Let $(y_i,z_i,k_i) \in H_{\triangle}^{2,\beta}[0,\infty)$ for $i=1,2$, and denote their images under $\Phi$ by $(Y_i,Z_i,K_i)$. The difference $\Delta Y = Y_1 - Y_2$, etc., satisfies a similar BSDE with generator difference:
\[
\Delta g(t,s) = g(t,s,y_1(s),z_1(t,s),k_1(t,s,\cdot)) - g(t,s,y_2(s),z_2(t,s),k_2(t,s,\cdot)).
\]
By the Lipschitz condition,
\[
|\Delta g(t,s)| \leq L\left(|\Delta y(s)| + |\Delta z(t,s)| + \|\Delta k(t,s,\cdot)\|_{L^2(\nu)}\right).
\]
Applying the same estimates as in Step 2 to the differences, we get:
\begin{align*}
&\mathbb{E}\left[ e^{\beta t}|\Delta Y(t)|^2 + \int_t^\infty e^{\beta s}\left( \frac{\beta}{2}|\Delta\widetilde{Y}^t(s)|^2 + |\Delta\widetilde{Z}^t(s)|^2 + \|\Delta\widetilde{K}^t(s,\cdot)\|_{L^2(\nu)}^2 \right) ds \right] \\
&\leq \frac{2}{\beta}\mathbb{E}\left[ \int_t^\infty e^{\beta s}|\Delta g(t,s)|^2 ds \right] \\
&\leq \frac{2L^2}{\beta}\mathbb{E}\left[ \int_t^\infty e^{\beta s}\left(|\Delta y(s)|^2 + |\Delta z(t,s)|^2 + \|\Delta k(t,s,\cdot)\|_{L^2(\nu)}^2\right) ds \right].
\end{align*}
Integrating with respect to $t$ from $0$ to $\infty$ and applying Fubini's theorem gives:
\begin{align*}
&\mathbb{E}\int_0^\infty e^{\beta t}|\Delta Y(t)|^2 dt + \mathbb{E}\int_0^\infty \int_t^\infty e^{\beta s}\left( \frac{\beta}{2}|\Delta\widetilde{Y}^t(s)|^2 + |\Delta\widetilde{Z}^t(s)|^2 + \|\Delta\widetilde{K}^t(s,\cdot)\|_{L^2(\nu)}^2 \right) ds dt \\
&\leq \frac{2L^2}{\beta}\mathbb{E}\int_0^\infty \int_t^\infty e^{\beta s}\left(|\Delta y(s)|^2 + |\Delta z(t,s)|^2 + \|\Delta k(t,s,\cdot)\|_{L^2(\nu)}^2\right) ds dt.
\end{align*}
In particular, we have:
\[
\mathbb{E}\int_0^\infty e^{\beta t}|\Delta Y(t)|^2 dt \leq \frac{2L^2}{\beta}\mathbb{E}\int_0^\infty \int_t^\infty e^{\beta s}\left(|\Delta y(s)|^2 + |\Delta z(t,s)|^2 + \|\Delta k(t,s,\cdot)\|_{L^2(\nu)}^2\right) ds dt.
\]
For the other terms, note that:
\begin{align*}
&\mathbb{E}\int_0^\infty \int_t^\infty e^{\beta s}|\Delta Z(t,s)|^2 ds dt \\
&\leq \frac{2L^2}{\beta}\mathbb{E}\int_0^\infty \int_t^\infty e^{\beta s}\left(|\Delta y(s)|^2 + |\Delta z(t,s)|^2 + \|\Delta k(t,s,\cdot)\|_{L^2(\nu)}^2\right) ds dt.
\end{align*}
The same inequality holds for the $K$ term. Summing these three inequalities, we obtain:
\[
\|\Delta(Y,Z,K)\|_{H_{\triangle}^{2,\beta}}^2 \leq \frac{6L^2}{\beta}\|\Delta(y,z,k)\|_{H_{\triangle}^{2,\beta}}^2.
\]

\textbf{Step 4: Existence and uniqueness via Banach fixed-point theorem.}
Choosing $\beta > 6L^2$ makes $\Phi$ a contraction on the Banach space $H_{\triangle}^{2,\beta}[0,\infty)$. By the Banach fixed-point theorem, $\Phi$ has a unique fixed point $(Y,Z,K) \in H_{\triangle}^{2,\beta}[0,\infty)$. This fixed point satisfies equation \eqref{BSVIE-inf} by construction, proving existence. Uniqueness follows from the contraction property: if two solutions exist, they must both be fixed points of $\Phi$, and since the fixed point is unique, the solutions coincide.
\end{proof}

\end{document}